\newtheorem{theorem}{Theorem}[section]
\newtheorem{proposition}{Proposition}[section]
\newtheorem{lemma}[theorem]{Lemma}
\newtheorem{remark}{Remark}[section]
\newcommand{\vertiii}[1]{{\left\vert\kern-0.25ex\left\vert\kern-0.25ex\left\vert #1 
    \right\vert\kern-0.25ex\right\vert\kern-0.25ex\right\vert}}
\title{Higher-order Accurate Spectral Density Estimation of Functional Time Series}
\author{Tingyi Zhu}
\author{Dimitris N. Politis}
\affil{Department of Mathematics\\
University of California San Diego\\
La Jolla, CA 92093-0112, USA}
\date{}                     
\begin{document}
\maketitle

\begin{abstract}

Under the frequency domain framework for weakly dependent functional time series, a key element is the spectral density kernel which encapsulates the second-order dynamics of the process. We propose a class of spectral density kernel estimators based on the notion of a flat-top kernel. The new class of estimators employs the inverse Fourier transform of a flat-top function as the weight function employed to smooth the periodogram. It is shown that using a flat-top kernel yields a bias reduction and results in a higher-order accuracy in terms of optimizing the integrated mean square error (IMSE). Notably, the higher-order accuracy of flat-top estimation comes at the sacrifice of the positive semi-definite property. Nevertheless, we show how a flat-top estimator can be modified to become positive semi-definite (even strictly positive definite) in finite samples while retaining its favorable asymptotic properties. In addition, we introduce a data-driven bandwidth selection procedure realized by an automatic inspection of the estimated correlation structure. Our asymptotic results are complemented by a finite-sample simulation where the higher-order accuracy of flat-top estimators is manifested in practice.
\\

\noindent
{\bf Keywords:} Functional time series; spectral density kernel; spectral density estimation flat-top kernel, positive semi-definite estimation
\end{abstract}

\section{Introduction}
Functional time series has become a recent focus within the statistical research of functional data analysis due to the fact that functional data are often collected sequentially over time. Typically, we consider a stationary functional sequence $\{X_t(\tau); \tau\in[0,1]\}_{t\in \mathbb{Z}}$ whose terms are random elements of the separable Hilbert space $L^2([0,1],\mathbb{R})$. The central issue in the analysis of functional time series  is to take into account the temporal dependence between the observations. That amounts to the investigation of  second-order characteristics of the functional sequences. 
A handful of  early papers have studied the covariance structure  of the functional sequences with dependence. For example,  
Bosq (2002) \cite{Bosq2002}, Dehling and Sharipov (2005) \cite{Deh2005} consider the estimation of covariance operator for functional autoregressive processes, Horv\'ath and Kokoszka (2010) \cite{Hor2010} studied  the  covariance structure of  weakly dependent functional time series under an $m$-dependence condition;
see also Horv\'ath and Kokoszka (2012) \cite{Hor2012} for an overview. 

Nevertheless, to obtain a complete description of the second-order structure of dependent functional sequences, one needs to consider autocovariance operators, or autocovariance kernels relating different lags of the series, analogous to the autocovariance matrices in the context of multivariate time series analysis.
One statistic of interest associated with the autocovariance operator is the long-run covariance kernel (or long-run covariance function) defined as $C(\tau,\sigma)=\sum_{l \in \mathbb{Z}} r_l(\tau,\sigma)$ where $r_t(\tau, \sigma)=\text{cov}(X_{t+s}(\tau),X_{s}(\sigma))$ for $\tau,\sigma\in [0,1]$  and $t,s\in \mathbb{Z}$, is the so-called autocovariance kernel.
The analysis of the long-run covariance kernel is applicable to general functional dependence sequences without a particular model assumption.

Horv\'ath et. al (2013) \cite{Hor2013} proposed the kernel lag-window estimator of $C(\tau,\sigma)$ and showed its consistency under mild conditions. 
The asymptotic normality of the estimator is established in Berkes et. al. (2016) \cite{Ber2016}.
The estimation has applications in mean and stationarity testing of functional time series, see Horv\'ath et. al (2015) \cite{Hor2015} and Jirak (2013) \cite{Jir2013}. 
Horv\'ath et. al (2016) \cite{Hor2016} and Rice and Shang (2016) \cite{Ric2016} address the bandwidth selection for the kernel of the lag-window estimator.

Rather than focus on the isolated characteristic like the long-run covariance, Panaretos and Tavakoli (2013) \cite{Pan2013} approach the problem of inferring the second-order structure of stationary functional time series via Fourier analysis, formulating a frequency domain framework for weakly dependent functional data. In the frequency domain of functional setting, the entire second-order dynamical properties are encoded in the spectral density kernel  which is defined as
\begin{equation}
f_\omega(\tau, \sigma)=\frac{1}{2\pi}\sum_{t\in \mathbb{Z}}\text{exp}(-i\omega t)r_t(\tau,\sigma),
\end{equation}

\noindent
where the autocovariance kernel $r_t(\tau, \sigma)$ and the spectral density kernel $f_\omega(\tau,\sigma)$ comprise a Fourier pair. The notion of spectral density kernel in the functional setting is a generalization of finite-dimensional notion in the context of spectral density analysis of multivariate time series, which has been extensively studied by prominent statistical researchers; see, e.g. Parzen (1957, 1961) \cite{Par1957,Par1961}, Brillinger and Rosenblatt (1967) \cite{Bri1967}, Hannan (1970) \cite{Han1970} and Priestley (1981) \cite{Pri1981}.
A consistent estimate of the spectral density kernel in the form of a weighted average of the periodogram kernel---the functional analogous of periodogram matrix---is also proposed  in Panaretos and Tavakoli (2013) \cite{Pan2013} under a type of cumulant mixing condition. This weak dependence condition is the functional analog of classical cummulant-type mixing condition of Brillinger (2001) \cite{Bri2001}.

In this paper, we propose a new class of spectral density kernel estimators based on the notion of flat-top kernel defined in Politis (2001) \cite{Pol2001}; see also Politis and Romano (1995, 1996, 1999) \cite{Pol1995,Pol1996,Pol1999}. The new class of estimators employs the inverse Fourier transform of a flat-top function to  construct the weight function smoothing the periodogram. With the choice of a  high-order flat-top kernel, it is shown to be able to achieve bias reduction, and hence the higher-order accuracy in terms of optimizing the integrated mean square error (IMSE). It is also nearly equal to the general lag-window type estimators which is a well-know fact in finite-dimensional case; see Brockwell and Richard (2013) \cite{Bro2013} and Brillinger (2001) \cite{Bri2001}. 

The higher-order accuracy of flat-top estimation typically comes at the sacrifice of the positive semi-definite property. To address this issue, we show how a flat-top estimator can be modified to become positive semi-definite (even strictly positive definite) while retaining the favorable asymptotic properties. The modification is similar to the one proposed in Politis (2011) \cite{Pol2011}, for the treatment of flat-top spectral density matrix estimators. In addition, we introduce a data driven bandwidth selection procedure realized by an automatic inspection of the correlation structure.

The structure of the paper is as follows. In the next section, the flat-top estimator of the spectral density kernel is defined after introduction of some basic definitions of the frequency domain framework, and theorems on the asymptotic accuracy are given. Section 3 shows the almost equivalence of the proposed estimator in the form of weighted average of periodogram and the flat-top lag-window estimator. A modification of the flat-top spectral density estimator is introduced in Section 4 which results into an estimator that is positive semi-definite while retaining the estimator's higher-order accuracy. Section 5 addresses the issue of data-dependent bandwidth choice where an empirical rule of picking bandwidth is proposed. Our favorable asymptotic results are supported by a finite-sample simulation in Section 7 where higher-order accuracy of the flat-top estimators are manifested in practice. Finally, the technical proofs are gathered in the Appendix in Section 7.

\section{Spectral density kernel estimation}

We consider a functional time series $\{X_t(\tau); \tau\in[0,1]\}_{t\in \mathbb{Z}}$ where each $X_t(\cdot)$ belongs to the separable Hilbert space $L^2([0,1],\mathbb{R})$ possessing mean zero, i.e., $\mathbb{E}X_t(\tau)=0$ for all $\tau\in[0,1]$, and autocovariance kernel $r_t(\tau, \sigma)=\mathbb{E}X_{t+s}(\tau)X_{s}(\sigma)$ for $\tau,\sigma\in [0,1]$ and $s\in \mathbb{Z}$.

The space is equipped with the inner product $\langle\cdot,\cdot\rangle$ and the induced $L^2$ norm $||\cdot||_2$,
$$\langle f,g\rangle=\int_0^1 f(\tau)g(\tau)d\tau,\qquad ||g||_2=\langle g,g\rangle^{1/2}, \qquad f,g\in L^2([0,1],\mathbb{R}). $$
We assume the series $\{X_t\}_{t\in \mathbb{Z}}$ is strictly stationary in the sense that for any finite set of indices $I \subset \mathbb{Z}$ and any $s \subset \mathbb{Z}$, the joint law of $\{X_t, t\in I\}$ is identical to that of $\{X_{t+s}, t\in I\}$.

In addition, the weak dependence structure among the observations $\{X_t\}$ is quantified by employing the notion of \emph{cumulant kernel} of a series. The pointwise definition of a $k$th \emph{order cumulant kernel} is 
$$\text{cum}(X_{t_1}(\tau_1),\dots, X_{t_k}(\tau_k))= \sum_{\nu=(\nu_1,\dots, \nu_p)}(-1)^{p-1}(p-1)!\prod_{l=1}^p\mathbb{E}\left[\prod_{j\in\nu_l}X_{t_j}(\tau_j)\right]$$
 where the sum extends over all unordered partitions of $\{1,\dots,k\}$. We will make use of  the following  cumulant mixing condition, defined for fixed $l\geq 0$ and $k=2,3,\dots$\\

\noindent
{\bf Condition $\mathbf{C(l,k)}$.}  For each $j=1,\dots,k-1,$
$$\sum_{t_1,\dots,t_{k-1}=-\infty}^{\infty} (1+|t_j|^l)||\text{cum}(X_{t_1},\dots, X_{t_{k-1}},X_0)||_2<\infty$$

We inherit the frequency domain framework of functional time series developed in Panaretos and Tavakoli (2013) \cite{Pan2013}, in which the functional version of discrete Fourier transform is introduced. Given a functional sequence of length $T$, $\{X_t\}_{t=0}^{T-1}$, the \emph{functional Discrete Fourier Transform} (fDFT) is defined as
\begin{equation}\label{eqn.peri1}                                                         
\tilde{X}_\omega^{(T)}(\tau)= (2\pi T)^{1/2}\sum_{t=0}^{T-1}X_t(\tau)\exp(-i\omega t).
\end{equation}
The tensor products of the fDFT leads to the notion of \emph{periodogram kernel}--the functional analogue of the periodogram matrix in the multivariate case. The \emph{periodogram kernel} is defined as 
\begin{equation}\label{eqn.peri2}    
p_\omega^{(T)}(\tau,\sigma)=\tilde{X}_\omega^{(T)}(\tau)\tilde{X}_{-\omega}^{(T)}(\sigma).
\end{equation}
The periodogram kernel is asymptotically unbiased under certain cumulant mixing conditions. However, it is not a consistent estimator of the spectral density kernel $f_\omega$ as its asymptotic covariance is not zero.  Panaretos and Tavakoli (2013) \cite{Pan2013} proposed a consistent estimator $f_{ \omega}^{(T)}$  by convolving the periodogram kernel with a weight function, which has the form 

\begin{equation}\label{f.T}
 f_{ \omega}^{(T)}(\tau,\sigma)=\frac{2\pi}{T}\sum_{s=1}^{T-1}W^{(T)}\left(\omega-\frac{2\pi s}{T}\right)p_{2\pi s/T}^{(T)}(\tau,\sigma).
\end{equation}

\noindent
The weight function, $W^{(T)}$, is constructed as 

\begin{equation}\label{W.T}
W^{(T)}(x)=\sum_{j\in \mathbb{Z}}\frac{1}{B_T}W\left(\frac{x+2\pi j}{B_T}\right)
\end{equation}

\noindent
where $B_T, T=1,2,\dots$ is a sequence of scale parameters with the properties $B_T>0, B_T\to 0$ and $B_TT\to \infty$ as $T\to \infty$. $W$ is a fixed function satisfying that $W$ is a positive, even function and
$$W(x)=0 \text{ for } |x|\geq 1; \;\;\; \int_{-\infty}^{\infty}W(x)dx=1;\;\;\; \int_{-\infty}^{\infty}W(x)^2dx<\infty.$$

\noindent
The summation over $j\in \mathbb{Z}$ in (\ref{W.T}) makes the weight function $W^{(T)}$ periodic with period $2\pi$. The same will be true for the estimator $f_\omega^{(T)}$ by its definition in (\ref{f.T}). With the above constraints imposed on function $W$, it has been shown in Panaretos and Tavakoli (2013) \cite{Pan2013} that $f_\omega^{(T)}$ is a consistent estimator of the spectral density kernel $f_\omega$ in mean square (with respect to Hilbert-Schmidt norm). The bias of $f_\omega^{(T)}$ is partly attributed to the assumption that  $W$ is positive and it  can potentially be significantly reduced if an appropriate function $W$ is chosen that is not restricted to be positive. To this aim, we propose a class of higher-order accurate estimator by making use of  the so-called flat-top kernels in the construction of weight function $W^{(T)}$. The resulting estimator is shown to achieve bias reduction while retaining the asymptotic covariance structure of $f_{ \omega}^{(T)}$ in (\ref{f.T}). 

To describe our estimator, we need the notion of a ``flat-top" kernel. A general flat-top kernel $\Lambda$ is defined in terms of its Fourier transform $\lambda$, which is in turn defined as
 
\begin{equation} 
  \lambda(s)=
   \begin{dcases*} 
      \;\;1 & if $|s|\leq c$,\\
      \;\;g(s) & otherwise.
   \end{dcases*}
\end{equation} 

\noindent
where $c>0$ is a parameter and $g: \mathbb{R}\to [-1,1]$ is a symmetric function, continuous at all but a finite number of points satisfying $g(c)=1$ and $\int_{\mathbb{R}}g^2(x)dx<\infty$. The flat-top kernel $\Lambda(x)$ is then given by the inverse Fourier transform of $\lambda(s)$
\begin{equation}\label{inv.fou.tran}
\Lambda(x) =\frac{1}{2\pi}\int^\infty_{-\infty}\lambda(s)e^{isx}ds.
\end{equation}
Note that in the preceding definition, the function $\lambda$, and hence the kernel $\Lambda$, depend on the function $g$ and the parameter $c$, but this dependence will not be explicitly denoted.

The function $\lambda(s)$ is `flat', i.e., constant, over the region $[-c,c]$, hence the name flat-top for the kernel function $\Lambda(x)$. 
If a kernel function $\Lambda$ has finite $q$th moment, and its moments up to order $q-1$ are equal to zero, i.e. $\int s^q\Lambda(s)ds<\infty$, and $\int s^k\Lambda(s)ds=0$ for all $k\leq q-1$, then the kernel is said to be of order $q$.
We have the following property concerning the order of the kernel function $\Lambda$:

\begin{proposition}\label{prop.order}
If $\lambda(s)$ is p times differentiable flat-top function and $\lambda^{(p)}$ is H\"older continuous of order $0<\alpha<1$, then $\Lambda(x)$ is a kernel of order $p-1$. 
\end{proposition}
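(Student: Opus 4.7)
My strategy is to use Fourier duality to convert the order-$(p-1)$ conditions on $\Lambda$—finiteness of the $(p-1)$-st moment and vanishing of all lower moments—into conditions on $\lambda$ and its derivatives at the origin. The flat-top hypothesis that $\lambda\equiv 1$ on $[-c,c]$ will make those conditions essentially automatic; the substantive work is to establish enough decay of $\Lambda$ at infinity to legitimize the Fourier manipulations.

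The first step is to show $|\Lambda(x)|=O(|x|^{-p-\alpha})$ as $|x|\to\infty$. I would integrate by parts $p$ times in $\Lambda(x)=\frac{1}{2\pi}\int \lambda(s)e^{isx}\,ds$, using the differentiability of $\lambda$ and the vanishing of $\lambda^{(k)}$ at $\pm\infty$ (which should follow from $\lambda\in L^2$ together with the smoothness hypothesis), to arrive at $(ix)^p\Lambda(x)=\frac{(-1)^p}{2\pi}\int \lambda^{(p)}(s)e^{isx}\,ds$. The right-hand side is (up to sign) the Fourier transform of $\lambda^{(p)}$, and via the classical translation identity $2\,\widehat{\lambda^{(p)}}(\xi)=\int[\lambda^{(p)}(s)-\lambda^{(p)}(s-\pi/\xi)]\,e^{-i\xi s}\,ds$ the H\"older-$\alpha$ hypothesis forces this transform to be $O(|\xi|^{-\alpha})$. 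Dividing yields the claimed decay of $\Lambda$, and since $\alpha>0$ it implies $\int|x|^k|\Lambda(x)|\,dx<\infty$ for every $k\leq p-1$.

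The second step uses this absolute integrability to differentiate the Fourier inversion identity $\lambda(s)=\int \Lambda(x)e^{-isx}\,dx$ under the integral sign $k$ times for $k=0,1,\ldots,p-1$, giving $\lambda^{(k)}(0)=(-i)^k\int x^k\Lambda(x)\,dx$. Since $\lambda\equiv 1$ on $[-c,c]$ with $c>0$, every derivative $\lambda^{(k)}$ vanishes throughout $(-c,c)$, so $\lambda^{(k)}(0)=0$ for $k\geq 1$ while $\lambda(0)=1$. Substituting yields $\int \Lambda(x)\,dx=1$ and $\int x^k\Lambda(x)\,dx=0$ for $1\leq k\leq p-1$. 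Combined with the finiteness of the $(p-1)$-st moment from Step 1, this is exactly the definition of a kernel of order $p-1$.

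The hard part is Step 1—specifically, making the iterated integration by parts rigorous (ensuring no boundary terms survive at $\pm\infty$, which may require extracting decay of the lower-order derivatives $\lambda^{(k)}$, $k<p$, from the smoothness and $L^2$ hypotheses on $\lambda$) and converting H\"older regularity cleanly into Fourier decay. Once the estimate $|\Lambda(x)|=O(|x|^{-p-\alpha})$ is in place, the remaining steps are routine applications of dominated convergence and the flat-top condition.
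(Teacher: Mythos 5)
Your proposal is correct and follows essentially the same route as the paper: the paper's Lemma 7.1 establishes the decay $|\Lambda(x)|=O(|x|^{-p-\alpha})$ by exactly your Step 1 (iterated integration by parts plus the half-period translation trick to exploit H\"older continuity), and its second step likewise differentiates the inversion formula $\lambda(s)=\int\Lambda(x)e^{-isx}dx$ under the integral and uses $\lambda^{(k)}(0)=0$ to kill the moments. The technical point you flag---justifying the vanishing of boundary terms at $\pm\infty$ in the integration by parts---is glossed over in the paper as well (it invokes a lemma stated for Fourier coefficients of periodic functions), so your treatment is at the same level of rigor.
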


\begin{proof}
See Appendix.
\end{proof}

In the following, we will be using $ \hat f_{\omega,\lambda}^{(T)}$ to denote the flat-top estimator employing the flat-top kernel $\Lambda$, which is in turn induced by a flat-top function $\lambda$. The estimator is of the same form as (\ref{f.T}), except the difference in the weight function, i.e.,

\begin{equation} \label{est}
 \hat f_{\omega,\lambda}^{(T)}(\tau,\sigma)=\frac{2\pi}{T}\sum_{s=1}^{T-1}W_\lambda^{(T)}\left(\omega-\frac{2\pi s}{T}\right)p_{2\pi s/T}^{(T)}(\tau,\sigma),
\end{equation}

\noindent
where 

\begin{equation} \label{weight}
W_\lambda^{(T)}(x)=\sum_{j\in \mathbb{Z}}\frac{1}{B_T}\Lambda\left(\frac{x+2\pi j}{B_T}\right),
\end{equation}

\noindent
with $\Lambda(x)$ being the flat-top kernel induced by a flat-top function $\lambda(s)$ defined in (\ref{inv.fou.tran}).\\

\noindent
The following theorems investigate the performance of $\hat f_{\omega,\lambda}^{(T)}$ employing the general flat-top kernel $\Lambda$.\\

\begin{theorem}\label{thm.bias}
Provided that $B_T\to 0$ and $B_TT\to \infty$ as $T\to \infty$, and assume $p$ is the maximum value that can be attained such that C(p,2) holds; then by choosing an appropriate flat-top kernel $\Lambda$ of order $p$, 
we have 
\begin{equation*}
\mathbb{E}[\hat f_{\omega,\lambda}^{(T)}(\tau, \sigma)]= f_\omega(\tau,\sigma)+ O(B_T^p)+O(B_T^{-1}T^{-1}),
\end{equation*}
where the equality holds in $L^2$, and the error terms are uniform in $\omega$.
\end{theorem}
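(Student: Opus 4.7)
The plan is to decompose the expectation into three pieces: the bias of the periodogram kernel, a Riemann-sum approximation error, and a smoothing bias controlled by the kernel's vanishing moments. Starting from
\begin{equation*}
\mathbb{E}[\hat f_{\omega,\lambda}^{(T)}(\tau,\sigma)]=\frac{2\pi}{T}\sum_{s=1}^{T-1}W_\lambda^{(T)}\!\left(\omega-\tfrac{2\pi s}{T}\right)\mathbb{E}[p_{2\pi s/T}^{(T)}(\tau,\sigma)],
\end{equation*}
I would first replace $\mathbb{E}[p_{2\pi s/T}^{(T)}]$ by $f_{2\pi s/T}$ using the standard expansion (available in Panaretos and Tavakoli (2013) under the summability $\sum\|r_t\|_2<\infty$, which is implied by C(p,2) whenever $p\geq 1$): uniformly in $\omega$, $\|\mathbb{E}[p_\omega^{(T)}]-f_\omega\|_2=O(T^{-1})$. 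Because $\|W_\lambda^{(T)}\|_{L^1([0,2\pi])}$ is bounded uniformly in $T$ by $\|\Lambda\|_{L^1(\mathbb{R})}$, this substitution contributes only an $O(T^{-1})$ error absorbed into $O(B_T^{-1}T^{-1})$.

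Next I would pass from the Riemann sum to the integral $\int_0^{2\pi}W_\lambda^{(T)}(\omega-x)f_x\,dx$. The error here is bounded by $(2\pi/T)$ times the total variation of $x\mapsto W_\lambda^{(T)}(\omega-x)f_x(\tau,\sigma)$ in the $L^2$ sense; since differentiating $B_T^{-1}\Lambda(\cdot/B_T)$ introduces a factor $B_T^{-2}$ while integration over one period reintroduces a factor $B_T$, this total variation is $O(B_T^{-1})$ and the Riemann-sum error is exactly $O(B_T^{-1}T^{-1})$. The crucial algebraic step is then to unfold the periodization in (\ref{weight}): by the usual identity for periodized kernels convolved against $2\pi$-periodic functions,
\begin{equation*}
\int_0^{2\pi}W_\lambda^{(T)}(\omega-x)f_x(\tau,\sigma)\,dx=\int_{-\infty}^{\infty}\Lambda(u)\,f_{\omega-B_Tu}(\tau,\sigma)\,du,
\end{equation*}
which converts the estimator's deterministic mean into a genuine convolution of $f_\cdot$ with the flat-top kernel on $\mathbb{R}$.

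The final step is to extract the bias of order $B_T^p$. Condition C(p,2) guarantees that $f_\omega$ is $p$-times differentiable in $\omega$ in the $L^2(\lbrack0,1\rbrack^2)$ sense, with
\begin{equation*}
\partial_\omega^{k}f_\omega(\tau,\sigma)=\frac{1}{2\pi}\sum_{t\in\mathbb{Z}}(-it)^k r_t(\tau,\sigma)e^{-i\omega t}\quad\text{for }0\leq k\leq p,
\end{equation*}
the $p$-th derivative being uniformly bounded in $\omega$ because of the summability built into C(p,2). A $p$-th order Taylor expansion of $f_{\omega-B_Tu}$ around $\omega$, integrated against $\Lambda(u)$, produces the leading term $f_\omega$ (using $\int\Lambda=\lambda(0)=1$), vanishing contributions from the moments of orders $1,\dots,p-1$ (since $\Lambda$ is of order $p$), and a remainder dominated by $B_T^p(p!)^{-1}\int|u|^p|\Lambda(u)|\,du\,\sup_\omega\|\partial_\omega^p f_\omega\|_2=O(B_T^p)$.

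The main obstacle will be justifying the $L^2$-valued Taylor expansion together with the integrability of $|u|^p\Lambda(u)$: one must check that the order-$p$ kernel produced by Proposition \ref{prop.order} indeed has a finite $p$-th absolute moment (so that the remainder integral converges), and that the Taylor remainder is uniform in $\omega$ in the Hilbert--Schmidt norm. Once these technical points are in place, summing the three error bounds yields exactly $O(B_T^p)+O(B_T^{-1}T^{-1})$, uniformly in $\omega$, as claimed.
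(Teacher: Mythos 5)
Your proposal follows essentially the same route as the paper's proof: replace $\mathbb{E}[p^{(T)}]$ by $f$ with an $O(T^{-1})$ error, pass from the Riemann sum to the integral via a total-variation bound costing $O(B_T^{-1}T^{-1})$, unfold the periodized weight into a convolution with $\Lambda$ on $\mathbb{R}$, and Taylor-expand to order $p$ using the vanishing moments. The technical caveats you flag (the $L^2$-valued Taylor expansion and the finiteness of $\int|u|^p|\Lambda(u)|\,du$, the latter secured by requiring $\lambda$ to be $p+1$ times differentiable as in Remark 2.1) are exactly the points the paper handles via Proposition \ref{prop.order} and the lemmas in the Appendix.
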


\begin{proof}
See Appendix.
\end{proof}

\begin{remark} \rm According to Proposition \ref{prop.order}, a sufficient condition for a kernel $\Lambda$ to be of order $p$ is that $\lambda$ is a $p+1$ times differentiable flat-top function and $\lambda^{(p+1)}$ is H\"older continuous. On the other hand, the decay rate of bias crucially depends on the cumulant mixing condition satisfied by the functional sequence. 
A type of moment condition is provided in Panaretos and Tavakoli (2013) \cite{Pan2013},  which is sufficient for the cumulant mixing condition to hold for a general linear process of the form $X_t=\sum_{s\in\mathbb{Z}}A_s\varepsilon_{t-s}$; see Proposition 4.1 therein. 
\end{remark}

\begin{remark} \rm It is worth mentioning that Theorem \ref{thm.bias}-\ref{thm.mse} can hold for a non-flat-top kernel $\Lambda$ of order $p$ as long as it satisfies properties (i)-(iv) in Lemma 7.2. Nevertheless, in the paper we will be focusing on the flat-top kernels for the simplicity of the proof. In addition, an empirical data-driven bandwidth selection rule is proposed in Section 6, which can be applied exclusively on flat-top kernels.
\end{remark}

The flat-top estimator  $\hat f_{\omega,\lambda}^{(T)}$ achieves bias improvements while retaining the rate of decay of the covariance structure as stated in the following theorem:

\begin{theorem} \label{thm.cov}
Under C(1,2) and C(1,4),
\begin{equation*}
cov(\hat f_{\omega_1, \lambda}^{{(T)}}(\tau_1,\sigma_1),\hat f_{\omega_2, \lambda}^{(T)}(\tau_2,\sigma_2))=O(B_T^{-2}T^{-1})
\end{equation*}
where the equality holds in $L^2$, uniformly in the $\omega$'s.

\end{theorem}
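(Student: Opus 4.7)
The plan is to expand the covariance as a double sum via the definition in (\ref{est}) and then invoke the functional analog of Brillinger's expansion for the periodogram covariance at Fourier frequencies. Writing $\omega_j^{*}:=2\pi s_j/T$ and suppressing the $(\tau_i,\sigma_i)$ arguments for brevity,
\begin{equation*}
cov\bigl(\hat f_{\omega_1,\lambda}^{(T)},\hat f_{\omega_2,\lambda}^{(T)}\bigr)=\Bigl(\tfrac{2\pi}{T}\Bigr)^{2}\sum_{s_1,s_2=1}^{T-1}W_\lambda^{(T)}(\omega_1-\omega_1^{*})\,W_\lambda^{(T)}(\omega_2-\omega_2^{*})\,cov\bigl(p_{\omega_1^{*}}^{(T)},p_{\omega_2^{*}}^{(T)}\bigr).
\end{equation*}
Under C(1,4), the inner periodogram covariance admits the familiar Brillinger-type decomposition into a ``principal'' second-order piece supported on the diagonals $s_1=s_2$ and $s_1=T-s_2$ (expressible through products of spectral density kernels, such as $f_{\omega_1^{*}}(\tau_1,\tau_2)\overline{f_{\omega_1^{*}}(\sigma_1,\sigma_2)}$ and its mirror), plus a fourth-cumulant remainder that is $O(T^{-1})$ in $L^{2}$-norm uniformly in $s_1,s_2$. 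This is the functional analog of Brillinger's identity (Brillinger 2001 \cite{Bri2001}), essentially available in Panaretos and Tavakoli (2013) \cite{Pan2013}.

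Once this substitution is made, the proof reduces to bounding two contributions. For the principal term the Kronecker-delta support collapses the double sum to a single sum over $T-1$ diagonal indices, which I would bound crudely using the sup norm:
\begin{equation*}
\Bigl(\tfrac{2\pi}{T}\Bigr)^{2}(T-1)\,\|W_\lambda^{(T)}\|_\infty^{2}\cdot\sup_{\omega}\bigl\|f_\omega\otimes f_{-\omega}\bigr\|_{L^{2}([0,1]^{4})}.
\end{equation*}
From the definition of $W_\lambda^{(T)}$ as a $2\pi$-periodized dilation of $B_T^{-1}\Lambda(\cdot/B_T)$ one reads off $\|W_\lambda^{(T)}\|_\infty=O(B_T^{-1})$, while $\sup_{\omega}\|f_\omega\|<\infty$ under C(1,2); hence this contribution is $O(B_T^{-2}T^{-1})$. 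For the fourth-cumulant remainder the double sum factorizes, and using the Riemann-sum bound $\sum_{s}|W_\lambda^{(T)}(\omega-2\pi s/T)|=O(T)$ (which follows from $\Lambda\in L^{1}(\mathbb{R})$), one obtains $(2\pi/T)^{2}\cdot O(T)\cdot O(T)\cdot O(T^{-1})=O(T^{-1})$. Both contributions are dominated by $O(B_T^{-2}T^{-1})$, uniformly in $\omega_1,\omega_2$.

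The main obstacle is producing the periodogram-covariance expansion in its genuine $L^{2}([0,1]^{4})$ form, so that the remainder is $O(T^{-1})$ in the Hilbert--Schmidt sense rather than only pointwise in $\tau_{i},\sigma_{i}$. Here C(1,4) enters essentially: it controls the fourth-order cumulant kernels of the functional series via a summability argument analogous to the one employed in the bias proof, and also ensures that the Brillinger remainder is uniform in the lag indices. Once this functional expansion is in hand, the remainder of the argument is routine bookkeeping with the flat-top weight function.
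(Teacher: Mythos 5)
Your argument is correct and follows essentially the same route as the paper, which simply invokes the periodogram-covariance expansion and proof of Corollary 3.3 in Panaretos and Tavakoli (2013), substituting the flat-top weight function's properties ($\int_{-\pi}^{\pi}W_\lambda^{(T)}=1$, $\|W_\lambda^{(T)}\|_\infty=O(B_T^{-1})$, $V_{-\pi}^{\pi}(W_\lambda^{(T)})=O(B_T^{-1})$, and the Riemann-sum approximation) from Lemmas 7.2 and 7.3 for those of the positive kernel. Your bookkeeping of the diagonal principal term ($O(B_T^{-2}T^{-1})$ via the sup-norm bound) and the fourth-cumulant remainder ($O(T^{-1})$, dominated since $B_T\to 0$) matches what that reference-based proof requires.
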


\noindent
Using our Lemmas \ref{property.WT} and \ref{lemma.WT.approx}, Theorem \ref{thm.cov} can be proved along the same lines of proof of Corollary 3.3 in Panaretos and Tavakoli (2013) \cite{Pan2013}. For fixed $\omega_1,\omega_2$, the covariance can be shown to have a sharper bound $O(B_T^{-1}T^{-1})$; see Proposition 3.4 in Panaretos and Tavakoli (2013) \cite{Pan2013}. \\

Concerning the mean square error, we need the notion of \emph{spectral density operator} of functional time series, which is introduced  in Panaretos and Tavakoli (2013)\cite{Pan2013}. The spectral density operator $\mathscr{F}_\omega$ is an operator induced by the spectral density kernel through right integration,

\begin{equation}
\mathscr{F}_\omega h(\tau)= \int_0^1 f_\omega(\tau,\sigma)h(\sigma)d\sigma = \frac{1}{2\pi}\sum_{t\in \mathbb{Z}}e^{-i\omega t} \int_0^1  r_t(\tau,\sigma)h(\sigma)d\sigma
=\frac{1}{2\pi}\sum_{t\in \mathbb{Z}}e^{-i\omega t}\mathscr{R}_t h(\tau),
\end{equation} 
where $\mathscr{R}_t$ is the \emph{autocovariance operator} induced by the autocovariance kernel through right integration,
\begin{equation}
\mathscr{R}_t h(\tau)=\int_0^1 r_t(\tau,\sigma)h(\sigma)d\sigma=\text{cov}[\langle X_0,h\rangle,X_t(\tau)], \qquad \quad h\in L^2([0,1],\mathbb{R}).
\end{equation} 

The spectral density operator $\mathscr{F}_\omega$ is the integral operator with kernel $f_\omega$. Analogously, we denote $\hat{\mathscr{F}}_{\omega, \lambda}^{(T)}$ the operator induced by the the kernel $\hat f_{\omega,\lambda}^{(T)}$ through right integration, and thereby the estimator of $\mathscr{F}_\omega$. Combining the results on the asymptotic bias and variance of the spectral density operator, we have the following consistency in {\em integrated mean square}  of the induced estimator $\hat{\mathscr{F}}_{\omega, \lambda}^{(T)}$ for the spectral density operator $\mathscr{F}_\omega$.

\begin{theorem}\label{thm.mse} 
Provided assumptions C(p,2) and C(1,4) hold, $B_T\to 0$, $B_TT\to\infty$, then the spectral density operator estimator $\hat{\mathscr{F}}_{\omega,\lambda}^{(T)}$ employing a flat-top kernel $\Lambda$ of order $p$ is consistent in integrated mean square, that is, 
$$\text{IMSE}(\hat{\mathscr{F}}_{\omega,\lambda}^{(T)}) = \int_{-\pi}^{\pi}\mathbb{E}\vertiii{\hat{\mathscr{F}}_{\omega,\lambda}^{(T)}-\mathscr{F}_\omega}_2^2d\omega \to 0, \qquad \text{as} \;\; T\to \infty,$$
where $\vertiii{\cdot}$ is the Hilbert-Schmidt norm. More precisely, $\text{IMSE}(\hat{\mathscr{F}}_{\omega,\lambda}^{(T)})=O(B_T^{2p})+O(B_T^{-1}T^{-1})$ as $T\to\infty$.
\end{theorem}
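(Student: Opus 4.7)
The plan is to proceed via the standard bias-variance decomposition applied to the integrated Hilbert--Schmidt norm. Since $\hat{\mathscr{F}}_{\omega,\lambda}^{(T)}-\mathscr{F}_\omega$ is the integral operator with kernel $\hat f_{\omega,\lambda}^{(T)}(\tau,\sigma)-f_\omega(\tau,\sigma)$, its squared Hilbert--Schmidt norm equals the $L^2([0,1]^2)$ norm of that kernel. Applying the identity $\mathbb{E}|X-c|^2=|\mathbb{E}X-c|^2+\text{Var}(X)$ pointwise in $(\tau,\sigma)$ and then Fubini gives
\begin{align*}
\text{IMSE}(\hat{\mathscr{F}}_{\omega,\lambda}^{(T)})
&=\int_{-\pi}^{\pi}\int_0^1\int_0^1\bigl|\mathbb{E}\hat f_{\omega,\lambda}^{(T)}(\tau,\sigma)-f_\omega(\tau,\sigma)\bigr|^2 d\tau\,d\sigma\,d\omega\\
&\quad+\int_{-\pi}^{\pi}\int_0^1\int_0^1\text{Var}\bigl(\hat f_{\omega,\lambda}^{(T)}(\tau,\sigma)\bigr)\,d\tau\,d\sigma\,d\omega,
\end{align*}
reducing the task to bounding each summand.

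The first summand is controlled by Theorem~\ref{thm.bias}: its uniform-in-$\omega$ $L^2$ conclusion gives
$$\int_0^1\int_0^1\bigl|\mathbb{E}\hat f_{\omega,\lambda}^{(T)}(\tau,\sigma)-f_\omega(\tau,\sigma)\bigr|^2 d\tau\,d\sigma=\bigl(O(B_T^p)+O(B_T^{-1}T^{-1})\bigr)^2=O(B_T^{2p})+O(B_T^{-2}T^{-2}),$$
uniformly in $\omega$, so integration over $[-\pi,\pi]$ contributes only a constant factor. Since $B_TT\to\infty$, the $O(B_T^{-2}T^{-2})$ term is dominated by the variance rate derived below, leaving an overall $O(B_T^{2p})$ contribution from the bias part.

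For the second summand I specialize Theorem~\ref{thm.cov} to $\omega_1=\omega_2=\omega$ and $(\tau_1,\sigma_1)=(\tau_2,\sigma_2)$; as flagged in the remark immediately after Theorem~\ref{thm.cov}, for fixed frequencies the covariance admits the sharper bound $\text{Var}(\hat f_{\omega,\lambda}^{(T)}(\tau,\sigma))=O(B_T^{-1}T^{-1})$ in the $L^2$ sense. Thus $\int_0^1\int_0^1 \text{Var}(\hat f_{\omega,\lambda}^{(T)}(\tau,\sigma))\,d\tau\,d\sigma=O(B_T^{-1}T^{-1})$ uniformly in $\omega$, and the outer $\omega$-integration again costs only a constant factor. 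Combining the two bounds yields $\text{IMSE}(\hat{\mathscr{F}}_{\omega,\lambda}^{(T)})=O(B_T^{2p})+O(B_T^{-1}T^{-1})$, which tends to zero under $B_T\to 0$ and $B_TT\to\infty$.

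The main obstacle is securing the variance rate $O(B_T^{-1}T^{-1})$ with uniformity in $\omega$, rather than the crude $O(B_T^{-2}T^{-1})$ of Theorem~\ref{thm.cov}. This requires expanding the covariance of the smoothed periodogram kernel into its second-order part and fourth-order cumulant part, isolating the diagonal frequency contributions whose sums telescope into a single factor of $B_T^{-1}\int \Lambda^2$, and using the uniform bounds on $W_\lambda^{(T)}$ developed in the lemmas cited after Theorem~\ref{thm.cov} to control the remainder uniformly in $\omega$. Once this uniform variance rate is in hand, combining it with the bias bound from Theorem~\ref{thm.bias} and tracking which term dominates under $B_T\to 0$, $B_TT\to\infty$ is essentially bookkeeping.
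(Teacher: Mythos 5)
Your proposal is correct and takes essentially the same route as the paper, which in fact offers no written proof of this theorem beyond the sentence ``combining the results on the asymptotic bias and variance\dots'': a pointwise bias--variance decomposition of the squared Hilbert--Schmidt norm, Theorem~\ref{thm.bias} for the bias term (with the cross term $O(B_T^{-2}T^{-2})=o(B_T^{-1}T^{-1})$ absorbed as you note), and the sharper diagonal covariance bound $O(B_T^{-1}T^{-1})$ for the variance term. You are right to single out the uniformity in $\omega$ of that diagonal bound as the one step requiring real work; the paper itself delegates this entirely to the citation of Proposition~3.4 of Panaretos and Tavakoli (2013), so your sketch of how to obtain it is, if anything, more explicit than the source.
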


Theorem \ref{thm.mse} gives the rate of convergence of $\hat{\mathscr{F}}_{\omega,\lambda}^{(T)}$ to $\mathscr{F}_\omega$. In the meantime, it also suggests the optimal value of the bandwidth parameter $B_T$ in terms of optimizing the decay rate of integrated mean square error. Apparently, the optimal $B_T$ depends on the cumulant condition a functional sequence possesses, that is, the value of $p$. For any finite $p$,  the optimal decay rate $O(T^{-2p/(2p+1)})$ can be achieved with $B_T=T^{-1/(2p+1)}$. In the case that $p=\infty$, one can choose $B_T=1/\log{T}$ to obtain a favorable rate of $O(\log T/T)$.

\section{Alternate estimates and flat-top kernel choice}
\subsection{Alternate estimates} \label{sec.alt.est}
The spectral density kernel estimator considered in the previous section has the form of a weighted average of periodogram ordinates. In fact, the weight function $W_\lambda^{(T)}(x)$ in the estimate (\ref{est}) has an alternate form

\begin{equation}\label{alter.W}
W_\lambda^{(T)}(x) = \frac{1}{2\pi}\sum_{u\in \mathbb{Z}}\lambda(B_Tu)e^{-ixu}. 
\end{equation}
The equivalence of (\ref{weight}) and (\ref{alter.W}) can be easily verified by using Poisson summation formula. Moreover, if the discrete average in (\ref{est}) is replaced by a continuous one, the estimate becomes
\begin{align} 
\int_0^{2\pi}W_{\lambda}^{(T)}(\omega-\alpha)p_{\alpha}^{(T)}(\tau,\sigma)d\alpha&=\int_0^{2\pi}W_{\lambda}^{(T)}(\alpha)p_{\omega-\alpha}^{(T)}(\tau,\sigma)d\alpha \nonumber\\
&=\int_{-\infty}^{\infty}B_T^{-1}\Lambda(B_T^{-1}\alpha)p_{\omega-\alpha}^{(T)}(\tau,\sigma)d\alpha. \label{eqn.alter}
\end{align}

\noindent
By Equation (\ref{eqn.peri1}) and (\ref{eqn.peri2}), the periodogram kernel is given by
\begin{align}
p_{\omega}^{(T)}(\tau,\sigma) &= \frac{1}{2\pi}\sum_{|u|<T}\hat r_u(\tau,\sigma) e^{-i\omega u}
\end{align}
where

\begin{equation*}
\hat r_u(\tau,\sigma)= \frac{1}{T}\sum_{0\leq t,t+u\leq T-1}X_{t+u}(\tau)X_{t}(\sigma)
\end{equation*}
is the sample autocovariance kernel. If this is substituted into (\ref{eqn.alter}), then the estimate takes the form 

\begin{equation}\label{gene.est}
\frac{1}{2\pi}\sum_{|u|<T}\lambda(B_T u)\hat r_u(\tau,\sigma)e^{-i\omega u}
\end{equation}
where 

\begin{equation*}
\lambda(s) =\int_{-\infty}^\infty\Lambda(x)e^{-isx}dx.
\end{equation*}

With a flat-top function $\lambda$ in place,
the estimate (\ref{gene.est}) has a formal resemblance to the flat-top estimation for multivariate spectrum explored in Politis (2011) \cite{Pol2011} with the bandwidth parameter $m_T=B_T^{-1}$. The estimator has been shown to achieve higher-order accuracy in estimating the spectral density matrix; see Politis (2011) \cite{Pol2011} for details. In fact, the estimate (\ref{gene.est}) is  the general form of spectral estimation that has been extensively investigated by prominent statistical researchers as early as 1950s and 1960s. See, e.g., Grenander (1951) \cite{Gre1951}, Parzen (1957) \cite{Par1957} and Priestley (1962) \cite{Pri1962}.

\subsection{Flat-top kernel choice}
As suggested by Theorem \ref{thm.bias}, in order to achieve favorable asymptotic rates, it is desirable to choose a flat-top kernel $\Lambda$ of higher order, and hence a flat-top function $\lambda$  as smooth as possible according to Proposition \ref{thm.bias}. McMurry and Politis (2004) \cite{McM2004} constructed a member of the flat-top family that is infinitely differentiable, which is defined as

\begin{equation} \label{lam.id}
  \lambda_{ID,b,c}(s)=
   \begin{dcases*} 
      \;\;1 & if $|s|\leq c$,\\
      \;\;\exp(-b\exp(-b/(|s|-c)^2)/(|s|-1)^2) & if $c<|s|<1$,\\
      \;\;0 & if $|s|\geq 1$
   \end{dcases*}
\end{equation} 
where $c\in (0,1]$ determines the region over which $\lambda$ is identically 1, and $b>0$ is a shape parameter, making the transition from $\lambda_{ID,b,c}(c)=1$ to $\lambda_{ID,b,c}(1)=0$ more or less abrupt. 

The function $\text{exp}(-b\exp(-b/(|s|-c)^2)/(|s|-1)^2)$  connects the regions where $\lambda$ is 0 and the region where $\lambda$ is 1 in a manner such that $\lambda(s)$ is infinitely differentiable for all $s$, including where $|s| = c$ and $|s| = 1$. The resulting kernel $\Lambda$ is of infinite order in the sense that $\Lambda(x)$ decays faster than $|x|^{-m}$, for all positive finite $m$, as $|x| \to \infty$. Fig.1 shows the plots of the infinitely differentiable flat-top function $\lambda_{ID,b,c}(s)$ with $b=0.25$ and $c=0.05$, and the resulting kernel $\Lambda(x)$ as well as the corresponding weight function $W_{\lambda}^{(T)}(x)$. Note that the plot of $W_{\lambda}^{(T)}(x)$ can be created from either Equation (\ref{weight}) or (\ref{alter.W}) as their equivalence stated in Section \ref{sec.alt.est}.

\begin{figure}
\center
\includegraphics[width=5in]{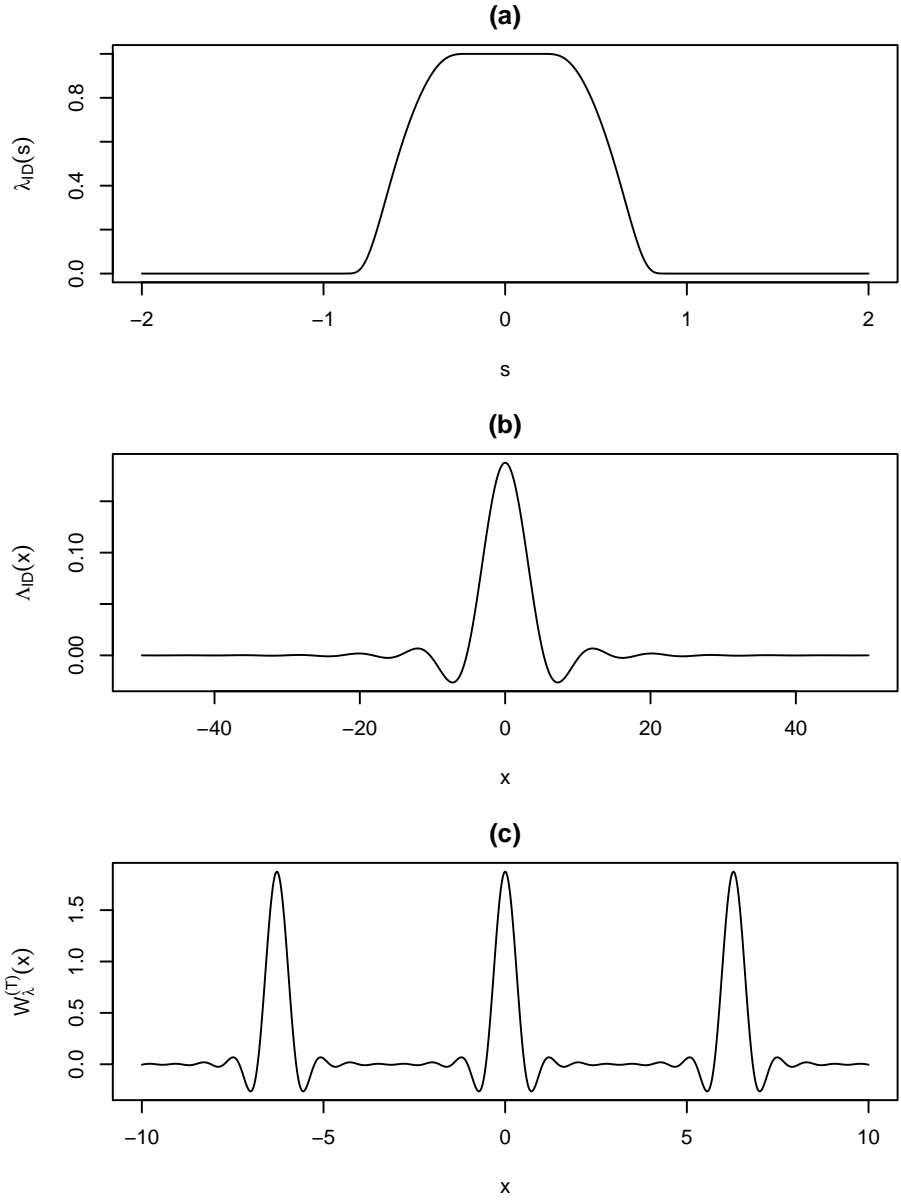} 
\caption{(a) Plot of $\lambda_{ID, 1/4, 0.05}(s)$; (b) Plot of corresponding kernel $\Lambda(x)$ induced by inverse Fourier transform of $\lambda_{ID, 1/4, 0.05}(s)$; (c) Plot of the corresponding weight function $W_{\lambda}^{(T)}$ with $B_T=0.1$. }
\end{figure}

Nevertheless, while the effectiveness of the flat-top kernels is reflected in Theorem \ref{thm.bias} and  \ref{thm.mse},  they in fact provide merely theoretical bounds for the decay rate of bias and IMSE. In the meantime, according to Theorem \ref{thm.bias}, the reduction of bias of the flat-top estimation could potentially be limited by the order of the cumulant condition, which indicates that an infinite-order kernel might not be necessary. That leads us to attempt other choices within the flat-top family. One simple representative flat-top function has the trapezoidal shape defined as 

\begin{equation} \label{lam.tr}
  \lambda_{TR, c}(s)=
   \begin{dcases*} 
      \;\;1 & if $|s|\leq c$,\\
      \;\;\frac{|s|-1}{c-1} & if $c<|s|<1$,\\
      \;\;0 & if $|s|\geq 1$.
   \end{dcases*}
\end{equation}

\noindent
The trapezoidal $\lambda_{TR, c}$ is continuous everywhere and it already exhibits good performance when being implemented for the estimation of spectral density matrix; see Politis (2011) \cite{Pol2011}.  The infinitely differentiable function $\lambda_{ID,b,c}(s)$ looks very much like the trapezoidal $\lambda_{TR, 1/2}$ with ultra-smoothed corners. 

Another choice to be considered is the flat-top function created by adding a piecewise cubic tail, similar to that of Parzen's (1961) \cite{Par1961} kernel, to the $[-c,c]$ flat-top region. It is defined as

\begin{equation} \label{lam.pr}
  \lambda_{PR, c}(s)=
   \begin{dcases*} 
      \;\;1 & if $0\leq s \leq c$,\\
      \;\; 1-6(s-c)^2+6|s-c|^3 & if $c\leq s\leq c+1/2$, \\ 
      \;\; 2(1-|s-c|)^3 & if $c+1/2\leq s \leq c+1$ \\
      \;\; 0 & if $ s\geq c+1$,\\
      \;\; \lambda_{PR, c}(-s) & if $s<0$.
   \end{dcases*}
\end{equation}

\noindent
Plots of flat-top functions $\lambda_{TR,1/2}$ and $\lambda_{PR,3/4}$ are shown in Figure 2. 
 Concerning the choice of parameters of flat-top kernels, i.e., $b$ and $c$, we refer the readers to Politis (2011) \cite{Pol2011} where a detailed discussion is given.

\begin{figure}
\center
\includegraphics[width=6in]{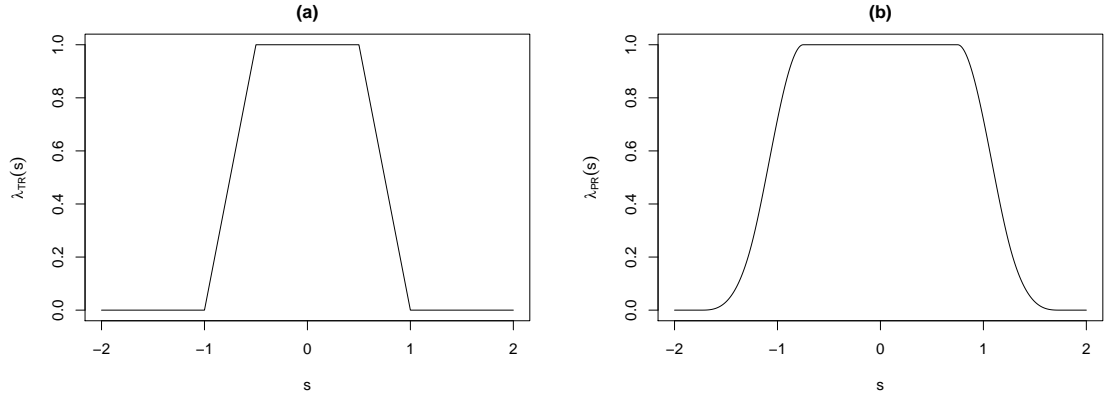} 
\caption{(a) Plot of trapezoidal $\lambda_{TR, 1/2}(s)$; (b) Plot of flat-top Parzen $\lambda_{PR,3/4}(s)$. }
\end{figure}

\section{Positive semi-definite spectral estimation}
By employing the infinite-order flat-top kernels, the flat-top estimator $\hat{\mathscr{F}}_{\omega, \lambda}^{(T)}$ is capable of achieving higher-order accuracy with improved estimation bias. The disadvantage of flat-top kernels, however, is that they are not positive semi-definite. As a result, the operator estimation $\hat{\mathscr{F}}_{\omega,\lambda}^{(T)}$ is not almost surely positive semi-definite for all $\omega$, while it converges to a positive semi-definite operator $\mathscr{F}_{\omega}$. 

The positive semi-definiteness of the estimation is desirable especially in the case of $\omega=0$ when the object is estimation of a long-run covariance operator. In the context of finite-dimensional time series analysis, the spectral density matrix estimators can be easily adjusted to be positive semi-definite via replacing negative eigenvalues by zeros in the diagonalization of the estimated matrices; see e.g. Politis (2011) \cite{Pol2011}. Analogously, we now show how the flat-top operator estimator $\hat{\mathscr{F}}_{\omega,\lambda}^{(T)}$ can be modified to render a positive semi-definite estimator while preserving the asymptotic consistency.

The spectral decomposition of operators in an infinite-dimensional Hilbert space is much more intricate than that in a finite-dimensional context. However, recall that both operators $\mathscr{F}_{\omega}$ and $\hat{\mathscr{F}}_{\omega, \lambda}^{(T)}$ are induced by kernel functions through right integration, and therefore they are symmetric Hilbert-Schmidt operators that admit the following decompositions 
   
\begin{equation}
\mathscr{F}_{\omega}(h)=\sum_{j=1}^{\infty}\nu_j\langle h, e_j\rangle e_j, \qquad h \in L^2([0,1],\mathbb{R})
\end{equation}
\begin{equation}
\hat{\mathscr{F}}_{\omega,\lambda}^{(T)}(h)=\sum_{j=1}^{\infty}\hat\nu_j\langle h, \hat e_j\rangle \hat e_j, \qquad h \in L^2([0,1],\mathbb{R})
\end{equation}

\noindent
where $(\nu_j)$ and $(\hat \nu_j)$ are two sequences of real numbers tending to zero; $(e_j)$ and $(\hat e_j)$ are two orthonormal bases of $L^2([0,1],\mathbb{R})$. We have for $j\geq 1$,

\begin{equation*}
\mathscr{F}_{\omega}(e_j)=\nu_j e_j \quad \text{and} \quad \hat{\mathscr{F}}_{\omega,\lambda}^{(T)}(\hat e_j)=\hat \nu_j \hat e_j;
\end{equation*}

\noindent
thus $(\nu_j, e_j)$ and $(\hat\nu_j, \hat e_j)$, $j\geq 1$ are complete sequences of eigenelements of $\mathscr{F}_{\omega}$ and $\hat{\mathscr{F}}_{\omega,\lambda}^{(T)}$ respectively.

Noting that the eigenvalues $\nu_j, \; j\geq 1$ are all non-negative since the operator $\mathscr{F}_{\omega}$ is positive semi-definite. To fix the possible negativity of $\hat{\mathscr{F}}_{\omega,\lambda}^{(T)}$, let $\tilde \nu_j=\max(\hat\nu_j,0)$ for all $j$, and define the estimator

\begin{equation}
\tilde{\mathscr{F}}_{\omega,\lambda}^{(T)}(h)=\sum_{j=1}^{\infty}\tilde\nu_j\langle h, \hat e_j\rangle \hat e_j, \qquad h \in L^2([0,1],\mathbb{R}).
\end{equation}

\noindent
We keep nonnegative eigenvalues of $\hat{\mathscr{F}}_{\omega,\lambda}^{(T)}$ and replace negative eigenvalues by zero, which makes the resulting operator $\tilde{\mathscr{F}}_{\omega,\lambda}^{(T)}$ an positive semi-definite estimator. The connection of $\hat{\mathscr{F}}_{\omega,\lambda}^{(T)}$ and $\tilde{\mathscr{F}}_{\omega,\lambda}^{(T)}$ is shown in the following inequality:\\

\begin{proposition}\label{prop.semidef}
Let $\tilde{\mathscr{F}}_{\omega,\lambda}^{(T)}$ be the positive semi-definite operator estimator of $\mathscr{F}_{\omega}$ defined in (9), then for a fixed $\omega$
\begin{equation}
\vertiii{\tilde{\mathscr{F}}_{\omega,\lambda}^{(T)} -\mathscr{F}_{\omega}}_2 \leq \vertiii{\hat{\mathscr{F}}_{\omega,\lambda}^{(T)} -\mathscr{F}_{\omega}}_2,
\end{equation}
where $\vertiii{\cdot}_2$ is the Hilbert-Schmidt norm.
\end{proposition}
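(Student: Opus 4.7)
The plan is to compute both Hilbert--Schmidt norms in the orthonormal basis $(\hat e_j)$ of eigenvectors of $\hat{\mathscr{F}}_{\omega,\lambda}^{(T)}$, and then verify the inequality term by term. Set $m_{jk}:=\langle \mathscr{F}_\omega \hat e_j, \hat e_k\rangle$. Since $\hat{\mathscr{F}}_{\omega,\lambda}^{(T)}$ and $\tilde{\mathscr{F}}_{\omega,\lambda}^{(T)}$ are both diagonal in $(\hat e_j)$, with eigenvalues $\hat\nu_j$ and $\tilde\nu_j=\max(\hat\nu_j,0)$ respectively, the Parseval identity for Hilbert--Schmidt operators gives
\begin{equation*}
\vertiii{\hat{\mathscr{F}}_{\omega,\lambda}^{(T)} - \mathscr{F}_\omega}_2^{\,2} = \sum_j (\hat\nu_j - m_{jj})^2 + \sum_{j\neq k} |m_{jk}|^2,
\end{equation*}
and the analogous formula with $\tilde\nu_j$ in place of $\hat\nu_j$. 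The off-diagonal contributions cancel between the two expressions, so it suffices to establish $(\tilde\nu_j-m_{jj})^2 \leq (\hat\nu_j-m_{jj})^2$ for each individual $j$.

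For each $j$ I split into two cases. If $\hat\nu_j\geq 0$, then $\tilde\nu_j=\hat\nu_j$ and the two summands coincide. If $\hat\nu_j<0$, then $\tilde\nu_j=0$, and I must show $m_{jj}^{\,2} \leq (m_{jj}-\hat\nu_j)^2$. This is precisely the step at which the positive semi-definiteness of $\mathscr{F}_\omega$ is used: since $\mathscr{F}_\omega$ is self-adjoint and PSD, the diagonal entry $m_{jj}=\langle \mathscr{F}_\omega \hat e_j,\hat e_j\rangle$ is real and nonnegative; together with $-\hat\nu_j>0$ this yields $m_{jj}-\hat\nu_j \geq m_{jj}\geq 0$, from which the squared inequality follows immediately. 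Summing over $j$ and reinstating the common off-diagonal sum gives the proposition.

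I do not anticipate a serious obstacle. The only point that needs a line of justification is that the Parseval decomposition of the Hilbert--Schmidt norm in the basis $(\hat e_j)$ is valid, but this is standard for Hilbert--Schmidt operators on a separable Hilbert space (both $\hat{\mathscr{F}}_{\omega,\lambda}^{(T)}$ and $\mathscr{F}_\omega$ are Hilbert--Schmidt, so $\hat{\mathscr{F}}_{\omega,\lambda}^{(T)}-\mathscr{F}_\omega$ is as well). Conceptually, the argument admits a one-line reformulation: $\tilde{\mathscr{F}}_{\omega,\lambda}^{(T)}$ is the nearest-point projection of $\hat{\mathscr{F}}_{\omega,\lambda}^{(T)}$, in Hilbert--Schmidt norm, onto the closed convex cone of positive semi-definite Hilbert--Schmidt operators, and the claim is the non-expansiveness of that projection applied to the element $\mathscr{F}_\omega$ of the cone. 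I prefer the direct basis computation because it makes the role of $m_{jj}\geq 0$ visible and avoids invoking projection theory on cones of operators.
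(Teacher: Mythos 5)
Your proof is correct, and it takes a genuinely different (and in fact more robust) route than the paper's. The paper argues that $\mathscr{F}_{\omega}$ and $\hat{\mathscr{F}}_{\omega,\lambda}^{(T)}$ commute on the grounds that both are induced by kernels through right integration, concludes that they can be simultaneously diagonalised with $e_j=\hat e_j$, and then compares eigenvalues directly via $|\tilde\nu_j-\nu_j|\leq|\hat\nu_j-\nu_j|$. That commutation claim is not justified in general---two self-adjoint Hilbert--Schmidt integral operators need not commute---so the paper's reduction to a common eigenbasis is the weak link in its argument. Your computation sidesteps this entirely: by expanding both Hilbert--Schmidt norms in the single orthonormal basis $(\hat e_j)$ that diagonalises the estimator, you observe that the off-diagonal contributions $\sum_{j\neq k}|m_{jk}|^2$ from $\mathscr{F}_\omega$ are identical in the two expressions and so cancel, leaving only the diagonal comparison $(\tilde\nu_j-m_{jj})^2\leq(\hat\nu_j-m_{jj})^2$, which follows from $m_{jj}=\langle\mathscr{F}_\omega\hat e_j,\hat e_j\rangle\geq 0$ (positive semi-definiteness of $\mathscr{F}_\omega$) exactly as you state. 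This is the standard non-expansiveness of the nearest-point projection onto the PSD cone, as you note, and it yields the proposition without any assumption on the eigenstructure of $\mathscr{F}_\omega$ relative to that of the estimator. In short: same conclusion, but your argument closes a gap that the paper's proof leaves open.
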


\begin{proof}
See Appendix.
\end{proof}

A direct consequence of the last result is the following corollary which shows that, in addition to being positive semi-definite, $\tilde{\mathscr{F}}_{\omega,\lambda}^{(T)}$ possesses the same mean square convergence of $\hat{\mathscr{F}}_{\omega,\lambda}^{(T)}$ given in Theorem \ref{thm.mse}.\\

\begin{theorem}
Under the condition of Theorem \ref{thm.mse},  the positive semi-definite spectral density operator estimate $\tilde{\mathscr{F}}_{\omega,\lambda}^{(T)}$ employing a flat-top kernel $\Lambda$ of order $p$  is consistent in integrated mean square with
$$\text{IMSE}(\tilde{\mathscr{F}}_{\omega,\lambda}^{(T)}) = \int_{-\pi}^{\pi}\mathbb{E}\vertiii{\tilde{\mathscr{F}}_{\omega,\lambda}^{(T)}-\mathscr{F}_\omega}_2^2d\omega =O(B_T^{2p})+O(B_T^{-1}T^{-1})$$
where $\vertiii{\cdot}_2$ is the Hilbert-Schmidt norm.
\end{theorem}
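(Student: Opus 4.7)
The plan is to deduce the theorem directly from Proposition \ref{prop.semidef} combined with Theorem \ref{thm.mse}, treating it essentially as a corollary rather than as an independent calculation. The structural point is that the truncation-of-negative-eigenvalues operation can only decrease the Hilbert--Schmidt distance to the target, pointwise in $\omega$ and $\omega$-by-$\omega$ almost surely, so the integrated mean square error of $\tilde{\mathscr{F}}_{\omega,\lambda}^{(T)}$ is dominated by that of $\hat{\mathscr{F}}_{\omega,\lambda}^{(T)}$.

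Concretely, I would first invoke Proposition \ref{prop.semidef} to obtain, for each fixed $\omega \in [-\pi,\pi]$ and on a set of full probability,
\begin{equation*}
\vertiii{\tilde{\mathscr{F}}_{\omega,\lambda}^{(T)} -\mathscr{F}_{\omega}}_2^{\,2} \;\leq\; \vertiii{\hat{\mathscr{F}}_{\omega,\lambda}^{(T)} -\mathscr{F}_{\omega}}_2^{\,2}.
\end{equation*}
Taking expectation preserves the inequality, yielding $\mathbb{E}\vertiii{\tilde{\mathscr{F}}_{\omega,\lambda}^{(T)} -\mathscr{F}_{\omega}}_2^{\,2} \leq \mathbb{E}\vertiii{\hat{\mathscr{F}}_{\omega,\lambda}^{(T)} -\mathscr{F}_{\omega}}_2^{\,2}$ for each $\omega$. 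Then integrating over $\omega \in [-\pi,\pi]$ and applying Theorem \ref{thm.mse} to the right-hand side gives
\begin{equation*}
\text{IMSE}(\tilde{\mathscr{F}}_{\omega,\lambda}^{(T)}) \;\leq\; \text{IMSE}(\hat{\mathscr{F}}_{\omega,\lambda}^{(T)}) \;=\; O(B_T^{2p}) + O(B_T^{-1}T^{-1}).
\end{equation*}

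The only technical items worth checking are (i) that the pointwise inequality of Proposition \ref{prop.semidef} holds jointly in $(\omega,\text{sample})$ in a sufficiently measurable way to permit integration, and (ii) that the interchange of expectation and integration over $\omega$ is legitimate. Point (i) is harmless since both operator-valued maps $\omega \mapsto \hat{\mathscr{F}}_{\omega,\lambda}^{(T)}$ and $\omega \mapsto \tilde{\mathscr{F}}_{\omega,\lambda}^{(T)}$ are built out of finitely many continuous operations on the periodogram ordinates (plus, for the tilde version, the continuous map taking a compact self-adjoint operator to its positive part), so the relevant Hilbert--Schmidt norms are jointly measurable. Point (ii) is an immediate application of Tonelli's theorem to the nonnegative integrand $\vertiii{\tilde{\mathscr{F}}_{\omega,\lambda}^{(T)} -\mathscr{F}_{\omega}}_2^{\,2}$, with finiteness supplied by the already-established IMSE bound for $\hat{\mathscr{F}}_{\omega,\lambda}^{(T)}$.

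There is no real obstacle; the substantive content has been shouldered by Proposition \ref{prop.semidef} and Theorem \ref{thm.mse}. If anything, the only place one might pause is to note that Proposition \ref{prop.semidef} is stated for fixed $\omega$, so it is worth emphasizing explicitly in the write-up that the inequality is applied $\omega$-wise before integration, and that no lower bound on the IMSE is claimed (so one does not need that positive-part truncation strictly improves the estimator, only that it does not hurt).
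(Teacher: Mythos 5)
Your proposal is correct and follows exactly the route the paper intends: the theorem is stated as a direct consequence of Proposition \ref{prop.semidef} applied $\omega$-wise, followed by taking expectations, integrating over $[-\pi,\pi]$, and invoking the bound of Theorem \ref{thm.mse}; your added remarks on measurability and Tonelli, and on the fact that only an upper bound is needed, are sound but not required. Nothing further is needed.
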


In the case that the estimand $\mathscr{F}_\omega$ is not only positives semi-definite but strictly positive definite, it is desirable to have a strictly positive definite estimator of $\mathscr{F}_\omega$. A similar modification of $\hat{\mathscr{F}}_{\omega,\lambda}^{(T)}$ can be applied here to make the estimator strictly positive definite. Let  $\check\nu_j=\max(\check\nu_j,\epsilon_T)$ for all $j$, where $\epsilon_T>0$ is some chosen sequence, and define the estimator

\begin{equation}
\check{\mathscr{F}}_{\omega,\lambda}^{(T)}(h)=\sum_{j=1}^{\infty}\check\nu_j\langle h, \hat e_j\rangle \hat e_j, \qquad h \in L^2([0,1],\mathbb{R}).
\end{equation}

The estimator $\check{\mathscr{F}}_{\omega,\lambda}^{(T)}$ is positive definite and it can be verified that it maintains the high accuracy of the flat-top estimator if $\epsilon_T=O(1/T)$. Thus, $\check{\mathscr{F}}_{\omega,\lambda}^{(T)}$ is a higher-order accurate, strictly positive definite estimator.

\section{Data-dependent bandwidth choice}

As it has been demonstrated in Section \ref{sec.alt.est} that the lag-window estimate (\ref{gene.est}) with bandwidth $m_T$ is nearly equal to the estimate (\ref{est}) with bandwidths $B_T=m_T^{-1}$, we propose here an empirical rule for choosing the bandwidth $B_T$ in practice, which resembles the bandwidth choosing rule for the flat-top lag-window introduced in Politis (2011) \cite{Pol2011}.

Recall that the sample autocovariance kernel 
\begin{equation*}
\hat r_u(\tau,\sigma)= \frac{1}{T}\sum_{0\leq t,t+u\leq T-1}X_{t+u}(\tau)X_{t}(\sigma),
\end{equation*}
the proposed bandwidth choice rule is done by a simple inspection of the functional version of correlogram/cross-correlogram, i.e. a plot of $\hat\rho_m(\tau,\sigma)$ vs. $m$ where
\begin{equation*}
\hat\rho_m(\tau,\sigma)= \frac{\hat r_m(\tau,\sigma)}{\sqrt{\hat r_0(\tau,\tau)\hat r_0(\sigma,\sigma)}}
\end{equation*}
for all $\tau,\sigma\in [0,1]$.

We look for a point, say $\hat q$, after which the correlogram for each pair of $(\tau, \sigma)$ appears negligible, i.e. $\hat\rho_m(\tau,\sigma)\simeq 0$ for $|m|>\hat q$, and $\hat\rho_{\hat q}(\tau,\sigma)\neq 0$. Here $\hat\rho_m(\tau,\sigma)\simeq 0$ is taken to mean that $\hat\rho_m(\tau,\sigma)$ is not taken significantly different from 0. 
In practice, we determine $\hat q$ by considering the correlogram for $(\tau,\sigma)$ over a finite grid of $[0,1]\times[0,1]$.
After identifying $\hat q$, the recommendation is to take 
\begin{equation}
\hat B_T = \frac{1}{\max(\lceil\hat q/c\rceil,1)}
\end{equation}
where $c$ is the parameter determines the `flat-top' region of $\lambda$.

From the flat-top lag-window perspective, the intuition behind the above bandwidth choice rule is an effort to extend the `flat-top' region of $\lambda$ over the whole of the region where $\hat\rho_{\hat q}(\tau,\sigma)$ is thought to be significant so as not to downweigh it and introduce bias. As scrutinized in Politis (2011) \cite{Pol2011}, the `flat-top' region of $\lambda$ can be greater than $[-c, c]$ depending on the choice of function $g$. The decreasing rate of $g(s)$ near $c$ could be slow enough so that $\lambda(s)\simeq 1$ for an interval much greater than $[-c, c]$; see, for example, (\ref{lam.id}) and Figure 1(a) regarding the infinitely differentiable $\lambda_{ID,b,c}(s)$ with $b=0.25$ and $c=0.05$. Instead of the interval $[-c,c]$, we consider an `effective' flat-top region of $\lambda$ defined as the interval $[-c_{ef},c_{ef}]$ where $c_{ef}$ is the largest number such that $\lambda(s)\geq 1-\epsilon$ for all $x$ in $[-c_{ef},c_{ef}]$; here $\epsilon$ is some small number chosen number, e.g. $\epsilon =0.01$.

Let $\Gamma = \{(i/10,j/10); i,j=0,\ldots,9)\}$ be a finite grid of $[0,1]^2$. Now we can formalize the empirical rule of choosing bandwidth $B_T$.\\

\noindent
{\bf{EMPIRICAL RULE OF CHOOSING BANDWIDTH $B_T$.}} 

\vspace{2mm}
\noindent
{\em{For $(\tau,\sigma) \in \Gamma$, let $\hat q_{\tau,\sigma}$  be the smallest nonnegative integer such that $|\hat \rho_{m+\hat q_{\tau,\sigma}}(\tau,\sigma)|< C_0\sqrt{\log_{10}T/T}$, for $m=0,1,\ldots,K_T$,  where $C_0>0$ is a fixed constant, and $K_T$ is a positive, nondecreasing integer-valued function of $T$ such that $K_T=o(\log T)$. Then, let $\hat q= \underset{{(\tau,\sigma)\in\Gamma}}{\max}\hat q_{\tau,\sigma}$, and $B_T=1/\max(\lceil\hat q/c_{ef}\rceil,1)$.
}}

\vspace{4mm}

The constant $C_0$ and the form of $K_T$ are the practitioner's choice. Politis (2003) \cite{Pol2003} makes the concrete recommendations $C_0\simeq 2$ and $K_T= \max(5, \sqrt{\log_{10}T})$ that have the interpretation of yielding (approximately) 95\% simultaneous confidence intervals for $\hat \rho_{m+\hat q_{\tau,\sigma}}(\tau,\sigma)$ with $m=1,\ldots, K_T$ by Bonferroni's inequality.

It is also worth noting that by considering the correlogram over the finite grid $\Gamma$, we actually generate a matrix of thresholds and,  $\hat q$ is picked as the maximum among the entries of the matrix, i.e, $\hat q_{\tau,\sigma}$ for $(\tau,\sigma) \in \Gamma$. This rule of identifying $\hat p$ can be blemished in the situation that certain $\hat q_{\tau,\sigma}$ are radically greater compared to the others. Picking $\hat p$ to be the average of the matrix entries will be a more reasonable choice when such a special case arises. Nevertheless, if the target is to estimate the spectral kernel $f_{\omega}$ for a particular pair $(\tau,\sigma)$, one can always choose the bandwidth $B_T$ by using the specified $\hat q_{\tau,\sigma}$, i.e. $B_T=1/\max(\lceil\hat q_{\tau,\sigma}/c_{ef}\rceil,1)$.

\section{Simulations}

We now present some numerical simulations to complement our asymptotic results. The main goal of the simulations is to compare the performance of the estimators employing flat-top kernels with that of  the non-flat-top estimation, as well as to illustrate the main issues discussed in the paper.  The simulations are performed on a simple functional moving average model
\begin{equation}\label{fma}
X_t = A_0\varepsilon_t+A_1\varepsilon_{t-1}.
\end{equation}
The simulations we carry out are analogous to that conducted in Panaretos and Tavakoli (2013) \cite{Pan2013}.
The innovation functions $\varepsilon_t$'s are independent Wiener processes on $[0,1]$, which are represented using a truncated Karhuen-Lo\`eve expansion,
\begin{equation*}
\varepsilon_t(\tau)= \sum_{k=1}^{1000}\xi_{k,t}\sqrt{\eta_k}e_k(\tau),
\end{equation*}
where $\eta_k=1/[(k-1/2)^2\pi^2]$, $\xi_{k,t}$ are independent standard Gaussian random variables and $e_k(\tau)=\sqrt{2}\sin[(k-1/2)\pi\tau]$ is orthonormal system in $L^2([0,1],\mathbb{R})$; see Adler (1990) \cite{Adl1990}. The operators $A_0$ and $A_1$ are constructed so that their image be contained within a 50-dimensional subspace of $L^2([0,1],\mathbb{R})$, spanned by an orthonormal basis $\psi_1,\dots,\psi_{50}$. Representing $\varepsilon_t$ in the $e_k$ basis, and $A_s,s=1,2$ in the $\psi_m\bigotimes e_k$ basis, we have the matrix representation of the process $X_t$ as $\bm{X}_t=\bm{A}_0\bm{\varepsilon}_t+\bm{A}_1\bm{\varepsilon}_{t-1}$, where $\bm{X}_t$ is a $50\times 1$ matrix, each ${\bm A}_s$ is a $50\times 100$ matrix, and each $\bm{\varepsilon}_t$ is a $100\times 1$ matrix.

A stretch of $X_t, t=0,\dots, T-1$ is generated for $T=2^n$ with $n=6,\dots, 10$. Matrices ${\bm A}_s, s=1,2$ are constructed as random Gaussian matrices with independent entries, such that element in $j$th row are $N(0,j^{-2})$ distributed. 

For the simulation, $B=200$ simulation runs are generated for each $T$ which are used to compute the IMSE by approximating the integral 
\begin{equation*}
2\int_0^\pi\mathbb{E}\vertiii{\hat{\mathscr{F}}_{\omega,\lambda}^{(T)}-\mathscr{F}_\omega}_2^2d\omega
\end{equation*}
by a weighted sum over the finite grid $\Gamma=\{\pi j/10;j=0,\ldots,9\}$. We consider the estimators with proposed flat-top kernels and compare them with the {\em Epanechnikov kernel}, $W(x)=\frac{3}{4}(1-x^2)^{+}$, which is non-flat-top implemented in  the simulations of Panaretos and Tavakoli (2013) \cite{Pan2013}. We apply bandwidth $B_T=T^{-1/5}$ for the estimator of each kernel. In addition, the bandwidths of the estimators employing flat-top kernels are also estimated using the empirical rule proposed in Section 5.

The simulation results are presented in Table 1, entries of which are logarithm of IMSE in base 2. As expected, the estimators employing flat-top kernels show a faster decay rate of IMSE compared to the one with the non-flat-top {\em Epanechnikov kernel}.
The performance of flat-top Parzen's kernel and flat-top infinitely differentiable kernel is close, while each slightly outperforms the trapezoid as the sample size grows. This might be due to the fact that the smoothness of the flat-top functions is indeed a factor on the decay of IMSE, but over-smoothing might not be necessary as the performance could potentially be limited by the order of cumulant conditions as Theorem \ref{thm.mse} suggests. Also note that implementing the empirical rule of bandwidth choice yields a slight improvement as the sample size grows.

\begin{table} 
    \centering
    \label{tab1}
    \begin{subtable}[b]{6in}
        \centering
    \begin{tabular}{p{5.0cm} || c | c | c | c | c | c}
    \hline
     $T$ & 64 & 128 & 256 & 512 & 1024 & 2048 \\ \hhline{=||=|=|=|=|=|=}
     Epanechnikov kernel & -6.188 & -6.852 &  -7.588 & -8.276 & -9.082 & -9.816  \\ 
    $\lambda_{TR,1/2}$ (Trapezoid) & {\bf-6.389} & -7.112 & -7.902 & -8.719 & -9.493 & -10.146   \\
    $\lambda_{PR,3/4}$ (flat-top Parzen) & -6.383 & -7.041 & -8.018 & {\bf-8.846} & -9.710 & -10.453 \\ 
    $\lambda_{ID,1/4,0.05}$ (flat-top Inf. Diff.) & -6.344 & {\bf-7.262} & {\bf-8.074} & -8.832 & {\bf-9.719} & {\bf -10.470} \\ 
    \hline
    \end{tabular}
       \caption{Bandwidth $B_T=T^{-1/5}$}
            \end{subtable}

   \vspace{2mm}
    \begin{subtable}[b]{6in}
        \centering
    \begin{tabular}{p{5.0cm} || c | c | c | c | c | c}
    \hline
     $T$ & 64 & 128 & 256 & 512 & 1024 & 2048 \\ \hhline{=||=|=|=|=|=|=}
     Epanechnikov kernel & -7.148 & -7.668 &  -8.577 & -9.075 & -9.950 & -10.887 \\ 
    $\lambda_{TR,1/2}$ (Trapezoid) & -7.321 & -7.808 & -8.789 & -9.297 & -10.204& -11.193 \\
    $\lambda_{PR,3/4}$ (flat-top Parzen) & {\bf-7.343} & -8.028 & -8.881 & {\bf-9.562} & -10.306 & {\bf-11.426}\\ 
    $\lambda_{ID,1/4,0.05}$ (flat-top Inf. Diff.) & -7.241 & {\bf-8.216} & {\bf-8.911} & -9.546 & {\bf-10.332} & -11.412  \\ 
    \hline
    \end{tabular}
       \caption{Bandwidth $B_T=2\cdot T^{-1/5}$}
            \end{subtable}
            
   \vspace{2mm}
   \begin{subtable}[b]{6in}
   \centering
    \begin{tabular}{p{5cm} || c | c | c | c | c | c }
    \hline
     $T$ & 64 & 128 & 256 & 512 & 1024 & 2048\\ \hhline{=||=|=|=|=|=|=}
    $\lambda_{TR,1/2}$ (Trapezoid) & -6.519 & -7.331 & -8.260 & -9.145 & -10.089 & -11.008  \\
    $\lambda_{PR,3/4}$ (flat-top Parzen) & {\bf -6.627} & {\bf-7.592} & {\bf-8.455} & {\bf-9.349} & {\bf-10.313} & {\bf-11.371} \\ 
    $\lambda_{ID,1/4,0.05}$ (flat-top Inf. Diff.) & -6.118 & -6.925 & -8.053 & -9.056 & -10.214 & -11.121\\ 
    \hline
    \end{tabular}
       \caption{Empirical rule of choosing $B_T$}
     \end{subtable}

    \caption{ Entries represent the logarithm of IMSEs in base 2 of different estimators using (a) bandwidth $B_T=T^{-1/5}$, (b) bandwidth $B_T=2\cdot T^{-1/5}$ and (c) empirical rule of choosing $B_T$. Sample size ranges from $2^6$ to $2^{11}$. Minimum IMSE for each $T$ is indicated by boldface.}
\end{table}

\section{Appendix: Proofs}
\subsection{Proof of Proposition \ref{prop.order}}
To prove Proposition \ref{prop.order}, we need the following lemma, which measures the magnitude of Fourier coefficient.
\begin{lemma} \label{lemma.fourier}
Let $f$ be an integrable function on the interval $[0,2\pi]$, and $\hat f(n)$ be its Fourier coefficients defined by 
\begin{equation}\label{fourier.lemma}
\hat f(n) = \frac{1}{2\pi}\int_0^{2\pi}f(t)e^{int}dt.
\end{equation}
If $f$ is $p$ times differentiable and $f^{(p)}$ is H\"older continuous of order $o<\alpha<1$, then 
$$|\hat f(n)|=O(|n|^{-p-\alpha}).$$
\end{lemma}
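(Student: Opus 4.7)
The plan is a two-step reduction that is standard in harmonic analysis: integration by parts to convert derivatives into factors of $|n|^{-1}$, followed by a translation trick that exploits H\"older continuity to extract an extra $|n|^{-\alpha}$.

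First I would integrate by parts $p$ times in the definition \eqref{fourier.lemma}. Reading $f$ as its $2\pi$-periodic extension (which is the implicit convention underlying the Fourier series formula in the statement), the boundary terms $[f^{(k-1)}(t)e^{int}]_0^{2\pi}$ vanish at every stage, producing the clean identity
$$\hat f(n) = \frac{1}{(-in)^p}\,\widehat{f^{(p)}}(n).$$
This reduces the problem to the case $p=0$: it suffices to show that for a H\"older continuous $g$ of order $\alpha$ on $[0,2\pi]$ one has $|\hat g(n)| = O(|n|^{-\alpha})$, and then apply this with $g = f^{(p)}$.

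For the H\"older step I would substitute $t \mapsto t + \pi/n$ in the integral defining $\hat g(n)$. Using $e^{in\pi/n} = -1$ together with $2\pi$-periodicity to shift the limits back to $[0,2\pi]$ gives
$$\hat g(n) \;=\; -\frac{1}{2\pi}\int_0^{2\pi} g(t + \pi/n)\, e^{int}\,dt.$$
Averaging this expression with the original representation of $\hat g(n)$ yields
$$\hat g(n) \;=\; \frac{1}{4\pi}\int_0^{2\pi}\bigl[g(t) - g(t + \pi/n)\bigr]e^{int}\,dt,$$
and the H\"older bound $|g(t) - g(t + \pi/n)| \le C(\pi/|n|)^\alpha$ then gives $|\hat g(n)| \le \tfrac{1}{2}C(\pi/|n|)^\alpha = O(|n|^{-\alpha})$. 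Combining with the identity from integration by parts produces the target estimate $|\hat f(n)| = O(|n|^{-p-\alpha})$.

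The only delicate point is making the periodicity convention explicit so that the boundary terms in the integration by parts genuinely cancel; once this is spelled out, the remainder is a routine computation and no substantial obstacle is anticipated.
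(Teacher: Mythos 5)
Your proposal is correct and matches the paper's own argument step for step: integration by parts $p$ times (using periodicity to kill the boundary terms) to obtain $\hat f(n)=(\pm in)^{-p}\widehat{f^{(p)}}(n)$, followed by the translation-and-averaging trick with shift $\pi/n$ and the H\"older bound to gain the extra factor $|n|^{-\alpha}$. Your remark about making the periodicity convention explicit is a worthwhile clarification that the paper glosses over, but the substance is identical.
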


\begin{proof}
By repeated integration by parts on Equation (\ref{fourier.lemma}), we have
\begin{equation}\label{fourier.int}
\hat f(n)=(in)^{-p}\widehat{f^{(p)}}(n).
 \end{equation}
On the other hand, $\widehat{f^{(p)}}(n)=\frac{1}{2\pi}\int_0^{2\pi}f^{(p)}(n)e^{-int}dt=\frac{-1}{2\pi}\int_0^{2\pi}f^{(p)}(n)e^{-in(t+\pi/n)}dt$; by a change of variable, $\widehat{f^{(p)}}(n)$ can be written as
$$ \widehat{f^{(p)}}(n)= \frac{1}{4\pi}\int_0^{2\pi}\left(f^{(p)}(t+\frac{\pi}{n})-f^{(p)}(t)\right)e^{-int}dt.$$
By H\"older continuity of $f^{(p)}$, we have
\begin{equation}\label{fourier.res}
|\widehat{f^{(p)}}(n)|\leq\frac{C}{|n|^{\alpha}},
\end{equation}
for some constant $C$. Combining (\ref{fourier.int}) and (\ref{fourier.res}), we obtain
\begin{equation}
|\hat f(n)|=|n|^{-p}|\widehat{f^{(p)}}(n)|=O(|n|^{-p-\alpha}).
\end{equation}
\end{proof}

\noindent
{\bf{Proof of Proposition 2.1.}} For a flat-top function $\lambda(s)$ and its inverse Fourier transform $\Lambda(x)$, we have
\begin{equation}\label{fourier.trans}
\Lambda(x) =\frac{1}{2\pi}\int_\mathbb{R}\lambda(s)e^{isx}ds,
\end{equation}
\begin{equation}\label{inv.fourier.trans}
\lambda(s) =\int_\mathbb{R}\Lambda(x)e^{-isx}dx.
\end{equation}
Since the assumption that $\lambda$ is $p$ times differentiable and $\lambda^{(p)}$ is H\"older continuous of order $0<\alpha<1$, by Lemma \ref{lemma.fourier}, we have $|x|^{p+\alpha}|\Lambda(x)|\leq C$ for some constant $C$, which implies $\Lambda(x)$ has finite moments up to order $p-1$, i.e. $\int_\mathbb{R}|x|^k|\Lambda(x)|dx<\infty$ for $0\leq k\leq p-1$.\\

\noindent
By repeated differentiations on both sides of (\ref{inv.fourier.trans}),  we obtain for $k = 1,\dots, p-2,$
$$\frac{d^k\lambda(s)}{d s^k} = \int_\mathbb{R}(-ix)^k\Lambda(x)e^{-isx}dx$$
by dominated convergence theorem. Now that $\lambda(x)$ is flat-top, $\lambda^{(k)}(0)$ is zero for all $k$, which in turn leads to 
\begin{equation}\label{lemma.order}
\int_\mathbb{R}x^k\Lambda(x)dx=0 \text{ for }k = 1,\dots, p-2
\end{equation}
if we set $s=0$ on both sides of (\ref{lemma.order}). Therefore, $\Lambda$ is a kernel of order $p-1$.\\

\subsection{Proof of Theorem \ref{thm.bias}}
To prove Theorem \ref{thm.bias}, the following lemmas are necessary.

\begin{lemma}\label{property.WT}
We have the following properties for the flap-top kernel $\Lambda(x)$ and the function $W_\lambda^{(T)}(x)$:\\

\noindent
(i) $\int_{\mathbb R}\Lambda(x)dx=1;$\\

\noindent
(ii) $\int_{-\pi}^{\pi}W_\lambda^{(T)}(x)dx=1;$\\

\noindent
Let $||f||_\infty=\text{sup}_{x\in[a,b]}|f(x)|$, and denoted by $V_a^b(h)$ the total variation of a function $h:[a,b]\to \mathbb{C}$.\\

\noindent
(iii) If $B_T<1$, $||W_\lambda^{(T)}||_{\infty}=\frac{1}{B_T}||\Lambda||_{\infty}+O(B_T)$;\\

\noindent
(iv) If $B_T<1$, $V_{-\pi}^{\pi}(W_\lambda^{(T)})\leq \frac{1}{B_T}V_{-\pi}^{\pi}(\Lambda).$
\end{lemma}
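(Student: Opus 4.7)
The plan is to address the four claims in turn, using the Fourier-pair structure $\Lambda\leftrightarrow\lambda$ together with the flat-top hypothesis $\lambda(0)=1$ and the periodisation identity (\ref{weight}). The uniform template is to convert a quantity associated with $W_\lambda^{(T)}$ on the fundamental period $[-\pi,\pi]$ into a corresponding quantity for $\Lambda$ on $\mathbb{R}$ via the change of variable $u=(x+2\pi j)/B_T$.

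For (i), I would invoke Fourier inversion in the form $\lambda(s)=\int_\mathbb{R}\Lambda(x)e^{-isx}dx$ (Equation (\ref{inv.fourier.trans})) and evaluate at $s=0$; since $0\in[-c,c]$, the flat-top definition forces $\lambda(0)=1$, hence $\int_\mathbb{R}\Lambda=1$. For (ii), I would swap sum and integral (legitimate by absolute integrability of $\Lambda$) and on each $j$-th summand apply the substitution above, obtaining
\begin{equation*}
\int_{-\pi}^{\pi}\frac{1}{B_T}\Lambda\!\left(\frac{x+2\pi j}{B_T}\right)dx=\int_{(2j-1)\pi/B_T}^{(2j+1)\pi/B_T}\Lambda(u)\,du.
\end{equation*}
As $j$ ranges over $\mathbb{Z}$ these intervals tile $\mathbb{R}$, so summing and invoking (i) gives $1$.

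For (iii), I would isolate the $j=0$ term, which supplies the leading $\|\Lambda\|_\infty/B_T$, and bound the rest. For $x\in[-\pi,\pi]$ and $j\ne 0$ we have $|x+2\pi j|/B_T\ge \pi/B_T$, so under a polynomial-decay estimate $|\Lambda(y)|\le C|y|^{-\beta}$ with $\beta\ge 2$ (supplied by Proposition \ref{prop.order} and Lemma \ref{lemma.fourier} from the mild smoothness of $\lambda$ that the flat-top family enjoys),
\begin{equation*}
\frac{1}{B_T}\sum_{j\ne 0}\left|\Lambda\!\left(\frac{x+2\pi j}{B_T}\right)\right|\le \frac{C}{B_T}\, B_T^{\beta}\sum_{j\ne 0}|j|^{-\beta}=O(B_T^{\beta-1})=O(B_T),
\end{equation*}
uniformly in $x$. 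For (iv), I would use subadditivity of total variation term-by-term in (\ref{weight}) together with the scaling law $V_{-\pi}^{\pi}\bigl(\tfrac{1}{B_T}\Lambda((\cdot+2\pi j)/B_T)\bigr)=\tfrac{1}{B_T}V_{(2j-1)\pi/B_T}^{(2j+1)\pi/B_T}(\Lambda)$. Summing over $j$ and observing that, when $B_T<1$, the shifted intervals exactly tile $\mathbb{R}$, the bound collapses to $V_\mathbb{R}(\Lambda)/B_T$, which is the claimed inequality.

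The only genuine obstacle sits inside (iii): the $O(B_T)$ residual demands polynomial decay of $\Lambda$, and this is not built into the bare flat-top definition---it requires the smoothness of $\lambda$ delivered by Proposition \ref{prop.order}. Once at least quadratic decay of $\Lambda$ is in hand (which the three canonical flat-tops $\lambda_{TR}$, $\lambda_{PR}$, and $\lambda_{ID}$ all furnish), the remaining moves---Fourier inversion, the periodising change of variable, and subadditivity of total variation---are routine.
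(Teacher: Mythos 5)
Your proofs of (i) and (iii) follow essentially the same route as the paper: Fourier inversion at $s=0$ for (i), and for (iii) isolating the $j=0$ term of (\ref{weight}) and controlling the tail of the periodisation by polynomial decay of $\Lambda$. In fact your treatment of (iii) is slightly more careful than the paper's: the paper bounds the tail using $|\Lambda(y)|\le C|y|^{-1-\alpha}$ and asserts the remainder is $O(B_T)$, whereas the scaling actually gives $O(B_T^{\alpha})$ with that decay rate; you correctly identify that one needs decay $|\Lambda(y)|\le C|y|^{-\beta}$ with $\beta\ge 2$ (available via Proposition \ref{prop.order} once $\lambda$ is smooth enough) to land on $O(B_T)$. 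For (ii) you give a self-contained tiling argument (term-by-term substitution $u=(x+2\pi j)/B_T$, the images partition $\mathbb{R}$, then invoke (i)); the paper instead defers to Lemma F.11 of Panaretos and Tavakoli (2013), so your version is more explicit and equally valid.

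The one place where your conclusion does not match the statement is (iv). Your subadditivity-plus-tiling argument is sound, but it delivers $V_{-\pi}^{\pi}(W_\lambda^{(T)})\le \frac{1}{B_T}V_{\mathbb{R}}(\Lambda)$, not the stated $\frac{1}{B_T}V_{-\pi}^{\pi}(\Lambda)$; since $\Lambda$ is the inverse Fourier transform of a compactly supported $\lambda$ it is never supported in $[-\pi,\pi]$, so $V_{\mathbb{R}}(\Lambda)>V_{-\pi}^{\pi}(\Lambda)$ and your bound is strictly weaker than the one claimed. The paper's own derivation gets the stated constant only by silently retaining just the $j=0$ summand of (\ref{weight}) when bounding the variation of the sum, which is not a legitimate upper bound; your version is the defensible one. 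The discrepancy is immaterial downstream: Lemma \ref{lemma.WT.approx} and Theorem \ref{thm.bias} only require $V_{-\pi}^{\pi}(W_\lambda^{(T)})=O(B_T^{-1})$, which either constant supplies, provided one adds the (true, under the same smoothness of $\lambda$) observation that $V_{\mathbb{R}}(\Lambda)<\infty$.
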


\begin{proof}
The statement $(i)$ follows directly from setting $s=0$ on both sides of Equation (\ref{inv.fourier.trans}). 
The statement $(ii)$ is obtained by following the same arguments in the proof of Lemma F.11 in Panaretos and Tavakoli (2013) \cite{Pan2013}. 
For the third statement, recall that
\begin{equation*}
W_\lambda^{(T)}(x)=\sum_{j\in \mathbb{Z}}\frac{1}{B_T}\Lambda\left(\frac{x+2\pi j}{B_T}\right).
\end{equation*}
If $B_T<1$, then for $x\in [-\pi,\pi],$
\begin{align*}
|W_\lambda^{(T)}(x)| &\leq  \frac{1}{B_T}  \sum_{j\in \mathbb{Z}}\left|\Lambda\left(\frac{x+2\pi j}{B_T}\right)\right|\\
&=\frac{1}{B_T}\left|\Lambda\left(\frac{x}{B_T}\right)\right| + \frac{1}{B_T}  \sum_{j\in \mathbb{Z}^{+}}\left|\Lambda\left(\frac{x+2\pi j}{B_T}\right)\right| + \frac{1}{B_T}  \sum_{j\in \mathbb{Z}^{-}}\left|\Lambda\left(\frac{x+2\pi j}{B_T}\right)\right|.
\end{align*}
Consider the second term, by the fact that $|\Lambda(x)| \leq \frac{C}{|x|^{1+\alpha}}$ for large $x$, we obtain
\begin{align*}
\frac{1}{B_T}  \sum_{j\in \mathbb{Z}^{+}}\left|\Lambda\left(\frac{x+2\pi j}{B_T}\right)\right| & \leq \frac{1}{B_T}\sum_{j=1}^{\infty}\frac{C}{\left(\frac{x+2\pi j}{B_T}\right)^{1+\alpha}}\\
& = B_T\sum_{j=1}^{\infty}\frac{C}{(x+2\pi j)^{1+\alpha}}\\
& = O(B_T)
\end{align*}
Similarly, the third term above is $O(B_T)$. Therefore, we have $|W_\lambda^{(T)}(x)| \leq \frac{1}{B_T} |\Lambda(x/B_T)| + O(B_T)$. The statement $(iii)$ then follows from the periodicity of $W_\lambda^{(T)}$.\\

\noindent
For the last statement, since
\begin{equation*}
W_\lambda^{(T)}(x)=\frac{1}{B_T}\Lambda\left(\frac{x}{B_T}\right)+ \sum_{j\in \mathbb{Z }, j\neq 0}\frac{1}{B_T}\Lambda\left(\frac{x+2\pi j}{B_T}\right),
\end{equation*}
then by the triangle inequality of total variation, we have
$$V_{-\pi}^{\pi}(W_\lambda^{(T)}) \leq V_{-\pi B_T}^{\pi B_T}(\Lambda / B_T)\leq  V_{-\pi }^{\pi}(\Lambda / B_T)= \frac{1}{B_T}V_{-\pi}^{\pi}(\Lambda)$$
where the second inequality holds because $B_T<1$. Here we use several properties of total variation, see Lemma F.6 in Panaretos and Tavakoli (2013) \cite{PanTav2013}.
\end{proof}

\begin{lemma}\label{lemma.WT.approx}
If $B_T\to 0$,
\begin{equation*}
\frac{2\pi}{T}\sum_{s=1}^{T-1}W_\lambda^{(T)}(\omega-2\pi s/T)=1+O(B_T^{-1}T^{-1}).
\end{equation*}
\end{lemma}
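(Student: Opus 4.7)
\medskip

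\noindent
\textbf{Proof proposal.} The plan is to recognize the sum as a Riemann-sum approximation to $\int_0^{2\pi}W_\lambda^{(T)}(\omega-\alpha)\,d\alpha$, which equals $1$ by Lemma~7.2(ii) (using $2\pi$-periodicity of $W_\lambda^{(T)}$ and a change of variable), and then control the discretization error via the total variation bound in Lemma~7.2(iv).

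First, I would insert the missing $s=0$ term: write
\begin{equation*}
\frac{2\pi}{T}\sum_{s=1}^{T-1}W_\lambda^{(T)}\!\left(\omega-\tfrac{2\pi s}{T}\right)
=\frac{2\pi}{T}\sum_{s=0}^{T-1}W_\lambda^{(T)}\!\left(\omega-\tfrac{2\pi s}{T}\right)-\frac{2\pi}{T}W_\lambda^{(T)}(\omega).
\end{equation*}
By Lemma~7.2(iii), $\|W_\lambda^{(T)}\|_\infty=O(B_T^{-1})$, so the correction term is $O(B_T^{-1}T^{-1})$ uniformly in $\omega$, matching the claimed bound.

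Next, I would compare the remaining sum with the integral. By $2\pi$-periodicity of $\alpha\mapsto W_\lambda^{(T)}(\omega-\alpha)$ and Lemma~7.2(ii),
\begin{equation*}
\int_0^{2\pi}W_\lambda^{(T)}(\omega-\alpha)\,d\alpha=\int_{-\pi}^{\pi}W_\lambda^{(T)}(y)\,dy=1.
\end{equation*}
The sum $\frac{2\pi}{T}\sum_{s=0}^{T-1}W_\lambda^{(T)}(\omega-2\pi s/T)$ is the left Riemann sum of this integral with partition width $h=2\pi/T$. For any function of bounded variation on $[0,2\pi]$, the absolute error of the left Riemann sum is bounded by $h$ times the total variation. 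Hence
\begin{equation*}
\left|\frac{2\pi}{T}\sum_{s=0}^{T-1}W_\lambda^{(T)}\!\left(\omega-\tfrac{2\pi s}{T}\right)-1\right|
\leq \frac{2\pi}{T}\,V_{0}^{2\pi}\!\bigl(W_\lambda^{(T)}(\omega-\cdot)\bigr)
=\frac{2\pi}{T}\,V_{-\pi}^{\pi}\!\bigl(W_\lambda^{(T)}\bigr),
\end{equation*}
where the last equality uses translation-invariance of total variation together with $2\pi$-periodicity. Applying Lemma~7.2(iv) gives $V_{-\pi}^{\pi}(W_\lambda^{(T)})\leq B_T^{-1}V_{-\pi}^{\pi}(\Lambda)=O(B_T^{-1})$, so the Riemann-sum error is $O(B_T^{-1}T^{-1})$ uniformly in $\omega$. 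Combining with the first step yields the claim.

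The main obstacle, such as it is, is the justification of the standard Riemann-sum/bounded-variation error bound in the periodic setting; one must be careful that $V_{-\pi}^{\pi}(W_\lambda^{(T)})$ captures the total variation of the shifted, periodized integrand on a full period of length $2\pi$, but this is immediate from periodicity. Everything else is bookkeeping built on Lemma~7.2.
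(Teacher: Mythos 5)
Your proposal is correct and follows essentially the same route as the paper: compare the sum to $\int_{-\pi}^{\pi}W_\lambda^{(T)}=1$ and bound the discrepancy by $\frac{2\pi}{T}\bigl(V_{-\pi}^{\pi}(W_\lambda^{(T)})+\|W_\lambda^{(T)}\|_\infty\bigr)=O(B_T^{-1}T^{-1})$ via Lemma 7.2(iii)--(iv). The only difference is presentational: the paper cites Lemma F.10 of the Panaretos--Tavakoli supplement for this combined bound, whereas you derive it by hand by splitting off the $s=0$ term and proving the bounded-variation Riemann-sum estimate directly.
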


\begin{proof}
Let
\begin{align*}
\Delta_n & =\int_{-\pi}^\pi W_\lambda^{(T)}(\omega-\alpha)d\alpha-\frac{2\pi}{T}\sum_{s=1}^{T-1}W_\lambda^{(T)}(\omega-2\pi s/T)\\
& = \int_{-\pi}^\pi {W'}_\lambda^{(T)}(\alpha)d\alpha-\frac{2\pi}{T}\sum_{s=1}^{T-1}{W'}_\lambda^{(T)}(2\pi s/T)
\end{align*}
where ${W'}_\lambda^{(T)}(\alpha)={W}_\lambda^{(T)}(\omega-\alpha)$. We have
\begin{align*}
|\Delta_n| &\leq \frac{2\pi}{T}\left\{V_{-\pi}^\pi({W'}_\lambda^{(T)})+||{W'}_\lambda^{(T)}||_{\infty}\right\}\\
& \leq \frac{2\pi}{T}\left\{\frac{1}{B_T}V_{-\pi}^\pi({\Lambda})+\frac{1}{B_T}||\Lambda||_{\infty}+O(B_T)\right\}\\
&= \frac{2\pi}{B_T T}\left\{V_{-\pi}^\pi({\Lambda})+||\Lambda||_{\infty} + O(B_T^2)\right\}.
\end{align*}
where the first inequality above follows from Lemma F.10 in Panaretos and Tavakoli (2013) \cite{PanTav2013} and the second inequality follows from $(iii)$ and $(iv)$ of Lemma \ref{property.WT}. Hence, $|\Delta_n|$ is of order $O(B_T^{-1}T^{-1})$ as $B_T\to 0$. Then by Lemma \ref{property.WT}$(ii)$, we obtain
\begin{equation*}
\frac{2\pi}{T}\sum_{s=1}^{T-1}W_\lambda^{(T)}(\omega-2\pi s/T)=1+O(B_T^{-1}T^{-1}).
\end{equation*}
\end{proof}

\noindent
{\bf{Proof of Theorem \ref{thm.bias}.}} By Proposition 2.6 in Panaretos and Tavakoli (2013) \cite{Pan2013}, under {\em C(0,2)} the periodogram kernel has expectation

\begin{equation*}
\mathbb{E}[p_{2\pi s/T}^{(T)}(\tau,\sigma)]=f_{2\pi s/T}(\tau,\sigma)+O(T^{-1}),
\end{equation*}
then we can write
\begin{align*}
\mathbb{E}[\hat f_{\omega,\lambda}^{(T)}(\tau,\sigma)] &= \frac{2\pi}{T} \sum_{s=1}^{T-1}W_\lambda^{(T)}\left(\omega-\frac{2\pi s}{T}\right)   \left\{f_{2\pi s/T}(\tau,\sigma)+O(T^{-1})\right\}=A+B,
\end{align*}
where
\begin{align*}
A &=  \frac{2\pi}{T} \sum_{s=1}^{T-1}W_\lambda^{(T)}\left(\omega-\frac{2\pi s}{T}\right)f_{2\pi s/T}(\tau,\sigma),\\
B &= O(T^{-1})\left\{ \frac{2\pi}{T} \sum_{s=1}^{T-1}W_\lambda^{(T)}\left(\omega-\frac{2\pi s}{T}\right) \right\}.
\end{align*} 
Noting that $||W_\lambda^{(T)}||_\infty = O(B_T^{-1})$ by Lemma \ref{property.WT}, and $||f_.||_\infty=O(1)$, it follows from Lemma F.6(i) and F.10 of Panaretos and Tavakoli (2013) \cite{PanTav2013} that 
$$A=\int_0^{2\pi}W_\lambda^{(T)}(\omega-\alpha)f_\alpha(\tau,\sigma)d\alpha + \varepsilon_T$$
where $\varepsilon_T\sim O(B_T^{-1}T^{-1})$, uniformly in $\omega$. Using Lemma \ref{lemma.WT.approx}, $B=O(T^{-1})$ if $B_TT\to \infty$. Combining these facts, and with a change of variable $\alpha=\omega-xB_T$ on the integral, we obtain 
\begin{align}
\mathbb{E}[\hat f_{\omega,\lambda}^{(T)}(\tau,\sigma)] &= \int_0^{2\pi}W_\lambda^{(T)}(\omega-\alpha)f_\alpha(\tau,\sigma)d\alpha+O(B_T^{-1}T^{-1}) + O(T^{-1})\nonumber\\
&=\int_{\mathbb{R}}\Lambda(x)f_{\omega-xB_T}dx +O(B_T^{-1}T^{-1}). \label{proof.bias1}
\end{align}

\noindent
Following the similar lines in the proof of Lemma F.4 in Panaretos and Tavakoli (2013) \cite{Pan2013}, we can use the Taylor expansion of $f_{\omega-xB_T}$ to obtain 
\begin{align*}
\int_{\mathbb{R}}\Lambda(x)f_{\omega-xB_T}dx = & f_\omega  + \sum_{k=1}^{p-1}\frac{(-1)^kB_T^k}{k!}\cdot\frac{\partial^kf_\omega}{\partial \omega^k}\cdot \int_{\mathbb{R}}x^k\Lambda(x)dx\\
&+\frac{B_T^p}{p!}\cdot\text{sup}\left\Vert\frac{\partial^kf_\omega}{\partial \omega^k}\right\Vert\cdot\int_{\mathbb{R}}|x|^p\Lambda(x)dx.
\end{align*}
With $\Lambda$ being a kernel of order $p$, i.e. $\int_{\mathbb{R}}x^k\Lambda(x)dx=0 $ for all $k=1,\dots,p-1$, we obtain
\begin{equation}\label{proof.bias2}
\int_{\mathbb{R}}\Lambda(x)f_{\omega-xB_T}dx=f_\omega+O(B_T^p).
\end{equation}

\noindent
Combining (\ref{proof.bias1}) and (\ref{proof.bias2}) completes the proof.

\subsection{Proof of Proposition \ref{prop.semidef}}
Recall the spectral decompositions of the operators $\mathscr{F}_{\omega}$, $\hat{\mathscr{F}}_{ \omega,\lambda}^{(T)}$ and $\tilde{\mathscr{F}}_{\omega,\lambda}^{(T)}$. For $h \in L^2([0,1],\mathbb{R})$,
\begin{equation*}
\mathscr{F}_{\omega}(h)=\sum_{j=1}^{\infty}\nu_j\langle h, e_j\rangle e_j, 
\end{equation*}
\begin{equation*}
\hat{\mathscr{F}}_{\omega,\lambda}^{(T)}(h)=\sum_{j=1}^{\infty}\hat\nu_j\langle h, \hat e_j\rangle \hat e_j,
\end{equation*}
\begin{equation*}
\tilde{\mathscr{F}}_{\omega,\lambda}^{(T)}(h)=\sum_{j=1}^{\infty}\tilde\nu_j\langle h, \hat e_j\rangle \hat e_j.
\end{equation*}

\noindent
Noting that $\mathscr{F}_{\omega}$ and $\hat{\mathscr{F}}_{\omega,\lambda}^{(T)}$ are both induced by kernel functions through right integration, the two operators commute, i.e., $\mathscr{F}_{\omega} \hat{\mathscr{F}}_{\omega,\lambda}^{(T)} =  \hat{\mathscr{F}}_{\omega,\lambda}^{(T)} \mathscr{F}_{\omega}$. Due to the fact that commuting operators can be simultaneously diagonalised, $\mathscr{F}_{\omega}$ and $\hat{\mathscr{F}}_{\omega,\lambda}^{(T)}$ share the common eigenfunctions, i.e., $e_j=\hat e_j$ for all $j$. Therefore, eigenvalues of $\hat{\mathscr{F}}_{\omega,\lambda}^{(T)}-\mathscr{F}_{\omega}$ are $\hat \nu_j -\nu_j$, $j\geq1$; and eigenvalues of $\tilde{\mathscr{F}}_{\omega,\lambda}^{(T)}-\mathscr{F}_{\omega}$ are $\tilde \nu_j -\nu_j$, $j\geq1$.\\

\noindent
Viewed as an estimator of the nonnegative $\nu_j$, $\tilde\nu_j$ is a better estimator than $\hat\nu_j$ in the sense that $|\tilde\nu_j-\nu_j|\leq |\hat\nu_j-\nu_j|$ always holds true. Hence, it follows that 

\begin{equation*}
\vertiii{\tilde{\mathscr{F}}_{\omega,\lambda}^{(T)} -\mathscr{F}_{\omega}}_2^2 =
\sum_{j=1}^{\infty}(\tilde \nu_j -\nu_j)^2 \leq
\sum_{j=1}^{\infty}(\hat \nu_j -\nu_j)^2   =
\vertiii{\hat{\mathscr{F}}_{\omega,\lambda}^{(T)} -\mathscr{F}_{\omega}}_2^2,
\end{equation*}
which completes the proof.

\

\end{document}